\theoremstyle{plain}
\newtheorem{theorem}{Theorem}[section]
\newtheorem{lemma}[theorem]{Lemma}
\newtheorem{proposition}[theorem]{Proposition}
\theoremstyle{definition}
\newtheorem{definition}[theorem]{Definition}
\newtheorem*{exercises}{Exercises}
\theoremstyle{remark}
\newtheorem*{remark}{Remark}
\newtheorem*{notation}{Notation}
\newtheorem*{example}{Example}
\DeclareMathOperator{\Aut}{Aut} \DeclareMathOperator{\Ext}{Ext}
\DeclareMathOperator{\Hom}{Hom} \DeclareMathOperator{\res}{res}
\DeclareMathOperator{\Ker}{Ker} 
 \DeclareMathOperator{\Stab}{Stab}
\DeclareMathOperator{\Res}{Res} 
\DeclareMathOperator{\End}{End} 
\DeclareMathOperator{\Proj}{Proj} 
\DeclareMathOperator{\inj}{inj}
\DeclareMathOperator{\free}{free}
\DeclareMathOperator{\Mod}{Mod}
\DeclareMathOperator{\dirlim}{\varinjlim}
\DeclareMathOperator{\indec}{indecomposable}
\DeclareMathOperator{\proj}{proj} 
\DeclareMathOperator{\ev}{ev}
\DeclareMathOperator{\fd}{finite\ dimensional}
\DeclareMathOperator{\rank}{rank}
\DeclareMathOperator{\np}{np} 
\DeclareMathOperator{\anything}{anything}
\DeclareMathOperator{\projdim}{projdim}
\DeclareMathOperator{\findim}{findim}
\DeclareMathOperator{\cd}{cd}
\DeclareMathOperator{\vcd}{vcd}
\DeclareMathOperator{\Sl}{Sl}
\DeclareMathOperator{\ModProj}{ModProj}
\DeclareMathOperator{\Grp}{Grp}
\mathchardef\mhyphen="2D
\begin{document}

\title{Endotrivial Modules for Infinite Groups}

\author{Peter Symonds}
\address{School of Mathematics\\
         University of Manchester\\
     Manchester M13 9PL\\
     United Kingdom}
\email{Peter.Symonds@manchester.ac.uk}



\maketitle


These notes were originally intended for a course given at the PIMS Summer School on Geometric and Topological Aspects of the Representation Theory of Finite Groups in Vancouver, July 27--30 2016. The material is based on a project with Nadia Mazza and a more general and complete treatment of the new results will appear elsewhere. We intend these notes to serve as an introduction to the field.

Why study endotrivial modules for infinite groups?

\begin{enumerate}

\item
You cannot say much about all modules. Look for some small class where you might be able to say something interesting.

\item 
For finite groups, endotrivial modules or their generalization, endopermutation modules, occur as sources of simple modules for $p$-solvable groups and in the description of the source algebra of a nilpotent block. Their classification for finite $p$-groups was a major achievement.

\item
We are led to reconsider a lot of work from the 1970s and '80s on cohomology of groups.

\item
It forces us to look carefully at stable categories of modules for infinite groups and suggests ways these might be described.

\end{enumerate}

What we are going to do is as follows.

\begin{enumerate}

\item
Recall briefly the definition and properties of endotrivial modules for finite groups in the way that we will use them. These were covered in other courses in the Summer School.

\item
Define a class of infinite groups that we call groups of type $\Phi$. These are the groups that our methods can deal with and we investigate their homological properties.

\item
Construct the category of stable $kG$-modules for $G$ of type $\Phi$. This is where our endotrivial modules will live.

\item
Define endotrivial modules for groups of type $\Phi$ and develop their basic properties.

\item
Prove the existence of an exact sequence that makes it possible to calculate the group of endotrivial modules for an amalgamated free product or an HNN extension.  

\end{enumerate}

For background material, it is helpful to know a little about endotrivial modules for finite groups. We recommend the article by Th\'{e}venaz \cite{thevenaz}. We assume that the reader is familiar with basic homological algebra and has some knowledge of derived and triangulated categories. One possible reference is the book by Happel \cite{happel}.

\section{Finite Groups}
\label{sec:fg}
 
\begin{notation}
In these notes $k$ will always be a field of finite characteristic $p$. In fact, everything we do can be carried through for $k$ any commutative noetherian ring of finite global dimension (and $p$-local when we mention $p$). 
\end{notation}

For this section, $G$ will be a finite group and all $kG$-modules will be finite dimensional unless stated otherwise. If $M$ and $N$ are two $kG$-modules then $M \otimes _kN$ is naturally a $kG$-module under the action $g(m \otimes n)=gm \otimes gn$. All tensor products will be over $k$ unless otherwise indicated.

\begin{definition}
A $kG$-module $M$ is endotrivial if there is another module $N$ such that $M \otimes  N \cong k \oplus (\proj)$. 
\end{definition}

Here $(\proj)$ denotes some projective $kG$-module. We write $M \simeq M'$ when $M \oplus (\proj) \cong M' \oplus (\proj)$ and denote the equivalence class by $[M]$. Note that $(\proj) \otimes_k (\anything) \cong (\proj)$, so the equivalence classes form an abelian group under $\otimes_k$, which we denote by $T(G)$. 

Notice that the trivial module $k$ is the identity element and $N$ in the definition is the inverse of $M$. Each equivalence class contains an indecomposable module $M$ such that every other member of the class is of the form $M \oplus (\proj)$ (exercise).

Given a $kG$-module $M$, find a surjection from a projective module to $M$, $P \twoheadrightarrow M$ and let $\Omega M$ be the kernel, so we have a short exact sequence $\Omega M \hookrightarrow P \twoheadrightarrow M$.  Then $\Omega M$ is well defined up to projective summands (Schanuel's Lemma), so $[\Omega M]$ is well defined. We can also go in the other direction using injective modules: $M \hookrightarrow I \twoheadrightarrow \mho M$. For finite groups, injective is equivalent to projective, so we write $\Omega^{-1}$ instead of $\mho$. Iterating these gives us $[\Omega ^r M], \ r \in \mathbb Z$.
It is an easy exercise to check that $\Omega (M \otimes  N) \simeq (\Omega M) \otimes  N$.

Now suppose that $M$ is endotrivial, so $M \otimes  N \simeq k$. Then $\Omega M \otimes  \Omega^{-1} N \simeq \Omega(M \otimes \Omega^{-1} N) \simeq M \otimes \Omega \Omega^{-1} N \simeq M \otimes N \simeq k$, so $\Omega M$ is also endotrivial.
Clearly $k$ is endotrivial, so $\Omega^r M$ is endotrivial; since $\Omega^r k \otimes \Omega^s k \simeq \Omega^{r+s} k$, we obtain a homomorphism $\mathbb Z \rightarrow T(G), \quad r \mapsto \Omega^r k$.
$T(G)$ also contains all 1-dimensional representations of $G$.

There are natural restriction maps $T(G) \rightarrow T(H)$ for $H \leq G$. 

Recall that $\Hom_k(M,N)$ is considered to be a $kG$-module by setting $(gf)(m)=gf(g^{-1}m)$ for $f \in \Hom_k(M,N), g\in G, m\in M$. In particular, we have the dual $kG$-module, $M^*=\Hom_k(M,k)$.

\begin{lemma} 
\label{lem:invdual}
If $M \otimes  N \cong k \oplus (\proj)$ then $[N] = [M^*]$. The natural evaluation map $\ev: M \otimes  M^* \rightarrow k$ is split over $kG$ and the kernel is projective.

\end{lemma}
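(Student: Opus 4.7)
The plan is to split the evaluation map $\ev: M \otimes M^* \to k$ and then show its kernel is projective. Once $M \otimes M^* \cong k \oplus (\proj)$, both $[N]$ and $[M^*]$ are tensor-inverses of $[M]$ in $T(G)$, so the equality $[N] = [M^*]$ will follow automatically from uniqueness of inverses. Constructing the splitting uses the standard coevaluation, which requires $\dim_k M$ to be a unit in $k$; the projectivity of the kernel will come from the uniqueness (noted as an exercise above) of the non-projective indecomposable summand of an endotrivial module.

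First I would establish that $p \nmid \dim_k M$. Restricting the isomorphism $M \otimes N \cong k \oplus P$ to a Sylow $p$-subgroup $S \leq G$ and using that projective $kS$-modules are free gives $\Res^G_S(M \otimes N) \cong k \oplus F$ with $F$ free; comparing dimensions yields $(\dim M)(\dim N) \equiv 1 \pmod p$, so $\dim M$ is invertible. Next, the coevaluation $k \to M \otimes M^*$, $1 \mapsto \sum_i e_i \otimes e_i^*$ (for a $k$-basis $\{e_i\}$) is a well-defined $kG$-map whose composition with $\ev$ is multiplication by $\dim_k M$. Scaling by $(\dim M)^{-1}$ therefore produces a $kG$-section, giving $M \otimes M^* \cong k \oplus K$ where $K := \ker \ev$.

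The remaining step, which I expect to be the main obstacle, is to show $K$ is projective. I would first observe that $M^*$ is itself endotrivial: dualizing $M \otimes N \cong k \oplus P$ gives $M^* \otimes N^* \cong k \oplus P^*$, and $P^*$ is projective because $kG$ is a symmetric algebra, so duality preserves the class of projective ($=$ injective) modules. Consequently $M \otimes M^*$ is endotrivial, with tensor-inverse $N \otimes N^*$: expanding $(M \otimes N) \otimes (M^* \otimes N^*) \cong (k \oplus P) \otimes (k \oplus P^*)$ yields $k$ plus summands each involving a projective, and $P \otimes X$ is projective for any $X$ since $kG \otimes_k X$ is free under the diagonal action.

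Finally, by the exercise noted earlier, any endotrivial module has exactly one non-projective indecomposable summand up to isomorphism. Assuming $p \mid |G|$ (the other case is trivial since $kG$ is then semisimple and there is nothing to prove), the trivial module $k$ is indecomposable and non-projective, and it is a direct summand of $M \otimes M^* \cong k \oplus K$. Hence $k$ must be \emph{the} unique non-projective indecomposable summand, forcing every indecomposable summand of $K$ to be projective, i.e., $K$ is projective. This shows $M \otimes M^* \cong k \oplus (\proj)$, so $[M^*]$ is the inverse of $[M]$ in $T(G)$, which equals $[N]$.
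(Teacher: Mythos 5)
Your proof is correct, but it takes a genuinely different route from the paper's. The paper splits $\ev$ by constructing an explicit $kG$-map $\phi : N \to M^*$, $(\phi(n))(m)=\pi(m\otimes n)$, and fitting it into a commutative square whose top row is the given splitting of $M\otimes N \twoheadrightarrow k$; commutativity then shows $k \mid M\otimes M^*$ with no mention of $\dim_k M$. From $N \mid N\otimes M\otimes M^* \cong M^*\oplus(\proj)$ and Krull--Schmidt it reads off $N\cong M'^*\oplus(\proj)$ directly, and projectivity of $\ker\ev$ falls out from $M\otimes M^*\cong k\oplus(\proj)$. You instead split $\ev$ via the coevaluation scaled by $(\dim_k M)^{-1}$, after establishing $p\nmid\dim_k M$ by restricting to a Sylow $p$-subgroup; you then prove $M^*$ (hence $M\otimes M^*$) is endotrivial and invoke the unique-nonprojective-summand fact (Exercise 2) to conclude $\ker\ev$ is projective, with $[N]=[M^*]$ coming last from uniqueness of inverses. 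Both arguments are sound and of comparable length. The paper's diagram-chase has the advantage that it avoids any dimension count, which is why the paper can later remark (after Lemma~\ref{lem:infendo}) that the same proof carries over to infinite-dimensional modules; your coevaluation-plus-dimension step is intrinsically finite-dimensional, so in the infinite-dimensional setting you would first have to reduce to the finite-dimensional case via Lemma~\ref{lem:infendo}. On the other hand, your route makes the role of the coevaluation and the fact that $M\otimes M^*$ is itself endotrivial explicit, which is conceptually clarifying.
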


\begin{proof}

We have a commutative diagram

\[ \begin{tikzcd}
M \otimes N \ar[r, "\cong"] \ar[d, "1 \otimes \phi"]
& k \oplus (\proj) \ar[r, "\pi"] &k \ar[d, equal] \ar[l, dashed, bend right]\\
M \otimes M^* \ar[rr, "\ev"] & & k,
\end{tikzcd} \]
where $\phi : N \rightarrow M^*$ is given by $(\phi (n))(m)=\pi(m \otimes n)$. Thus $k \mid M \otimes M^*$, splitting $\ev$ (the notation means \emph{is a summand of}) and so $N \mid N \otimes M \otimes M^* \cong M^* \oplus(\proj)$. But $M = M' \oplus (\proj)$ for some indecomposable module $M'$ and so $N \cong M'^* \oplus (\proj)$. Note that $N$ is not projective if $p \mid |G|$ and if $p  \nmid |G|$ then all modules are projective and there is nothing to prove.

Thus $[N]=[M^*]$ and $M \otimes M^* \cong k \oplus (\proj)$ so the kernel of $\ev$ must be projective.
\end{proof}

It we relax the condition that $M$ be finite dimensional we do not get any more endotrivial modules. 

\begin{lemma}
\label{lem:infendo}
If $M$ and $N$ are possibly infinite dimensional $kG$-modules such that $M \otimes N \cong k \oplus (\proj)$ then $M = \overline{M} \oplus (\proj)$ for some finite dimensional indecomposable $kG$-submodule $\overline{M}$ that is endotrivial in the original sense.
\end{lemma}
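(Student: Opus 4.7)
The plan is to carry out the argument of Lemma~\ref{lem:invdual} as far as possible in the infinite-dimensional setting, in order to extract a $kG$-linear endomorphism of $M$ of finite rank and trace~$1$, and then use Fitting's lemma to peel off a finite-dimensional summand. Explicitly, the same diagram-chase as in Lemma~\ref{lem:invdual} (with $\phi(n)(m)=\pi(m\otimes n)$) shows that the evaluation $\ev\colon M\otimes M^*\to k$ is split surjective. A $G$-fixed preimage of $1$ is necessarily a \emph{finite} tensor $\xi=\sum_{i=1}^{r}m_i\otimes f_i$ with $\sum_i f_i(m_i)=1$, and via the natural embedding $M\otimes M^*\hookrightarrow\End_k(M)$ it corresponds to a $kG$-linear endomorphism $\phi_\xi$ of $M$ of finite rank and trace~$1$. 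The image $V=\phi_\xi(M)$ is a finite-dimensional $kG$-submodule, and the Fitting decomposition of $\phi_\xi|_V\in\End_{kG}(V)$ yields $V=V_0\oplus V_1$ with $\phi_\xi|_{V_0}$ nilpotent and $\phi_\xi|_{V_1}$ an automorphism; since the trace is nonzero, $V_1\neq 0$. Setting $\overline{M}:=V_1$ and $\rho:=(\phi_\xi|_{V_1})^{-1}\circ\pi_1\circ\phi_\xi\colon M\to\overline{M}$, where $\pi_1\colon V\to V_1$ is the Fitting projection, gives a $kG$-linear retraction, hence $M=\overline{M}\oplus L$ with $L=\Ker\rho$.

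The crucial step is to show $\overline{M}\otimes N\cong k\oplus(\proj)$ and $L\otimes N\cong(\proj)$. The $kG$-idempotent $\epsilon:=e\otimes 1_N$ on $M\otimes N\cong k\oplus(\proj)$, where $e$ is the projection of $M$ onto $\overline{M}$, has image $\overline{M}\otimes N$ and kernel $L\otimes N$, and one checks that $\epsilon$ acts as the identity on the $k$-summand: if $\eta$ denotes the given $G$-fixed splitting, then $\epsilon(\eta)$ still projects to $1$. Writing $\epsilon$ as a $2\times 2$ matrix on $k\oplus(\proj)$ and conjugating by a suitable $kG$-automorphism of the form $\bigl(\begin{smallmatrix}1&0\\x&1\end{smallmatrix}\bigr)$ brings it to upper triangular form; its image and kernel can then be read off as $k\oplus\Ima(e')$ and a module isomorphic to $\Ima(1-e')$, where $e'$ is an idempotent on the projective summand. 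Both pieces are visibly projective as summands of a projective module.

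Applying the same construction with the roles of $M$ and $N$ swapped gives a finite-dimensional summand $\overline{N}$ of $N$, and all mixed terms in the expansion of $(\overline{M}\oplus L)\otimes(\overline{N}\oplus L_N)$ other than $\overline{M}\otimes\overline{N}$ then turn out to be projective. Comparing with $M\otimes N\cong k\oplus(\proj)$ and cancelling projectives on the finite-dimensional side via Krull--Schmidt--Azumaya forces $\overline{M}\otimes\overline{N}\cong k\oplus(\text{finite-dimensional projective})$, so $\overline{M}$ is endotrivial in the original sense. Tensoring $L\otimes N\cong(\proj)$ with $\overline{N}^*$ and using the projectivity of $L\otimes L_N$ together with $\overline{N}\otimes\overline{N}^*\cong k\oplus(\text{f.d.\ proj})$ gives $L\oplus(\proj)\cong(\proj)$, so $L$ is projective. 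Absorbing any finite-dimensional projective summands of $\overline{M}$ into the projective complement (an endotrivial module has a unique non-projective indecomposable summand) makes $\overline{M}$ indecomposable. The main obstacle throughout is the failure of Krull--Schmidt for arbitrary infinite-dimensional modules, and the key idea that sidesteps it is the explicit matrix manipulation of the idempotent~$\epsilon$, which produces a concrete isomorphism identifying the summands of $k\oplus(\proj)$ precisely.
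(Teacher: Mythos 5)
Your proof is correct but takes a genuinely different and more constructive route than the paper's. The paper takes a $G$-fixed element of $M\otimes N$ lifting $1\in k$, lets $M'$ be the finite-dimensional $kG$-submodule of $M$ generated by its $M$-tensor-factors, shows $k\mid M'\otimes N$ and hence $M\mid M'\oplus(\proj)$, and then appeals to an infinite Krull--Schmidt--Azumaya theorem (or to the $M_{\np}=\mho\Omega M$ device of Exercise~7) to conclude $M=(\fd)\oplus(\proj)$. You instead split the evaluation $\ev\colon M\otimes M^*\to k$, using only the finiteness-free part of the proof of Lemma~\ref{lem:invdual}, view the $G$-fixed splitting element $\xi$ as a finite-rank $kG$-endomorphism $\phi_\xi$ of trace $1$, and apply Fitting's lemma to $\phi_\xi$ on its finite-dimensional image $V$ to manufacture an explicit $kG$-direct summand $\overline{M}=V_1$ together with a retraction. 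Your key assertion, that $\pi\bigl((e\otimes 1_N)(\eta)\bigr)=1$, is correct but deserves a sentence of justification: it reduces to $\tr(e\circ\phi_\xi)=\tr(\phi_\xi|_{V_1})=1$, using that $e$ restricted to $V$ is precisely the Fitting projection onto $V_1$ and that the nilpotent block contributes trace~$0$. The subsequent matrix conjugation of the idempotent cleanly identifies $\overline{M}\otimes N\cong k\oplus(\proj)$ and $L\otimes N\cong(\proj)$ with no cancellation argument. You do still invoke Krull--Schmidt at the end, but only its classical finite-dimensional form applied to $\overline{M}\otimes\overline{N}$ sitting inside $k\oplus(\proj)$. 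What your route buys is a hands-on construction of the finite-dimensional summand and an elementary proof that the complement $L$ is projective; what the paper's route buys is brevity, at the cost of citing the heavier decomposition theorem for infinite sums of modules with local endomorphism rings.
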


\begin{proof}
Using the isomorphism $M \otimes N \cong k \oplus (\proj)$, write a generator of $K$ as $\sum m_i \otimes n_i$ and let $M'=\langle m_i \rangle_{kG} \subseteq M$. Then $\dim_k M' < \infty$ and $k \subseteq M' \otimes N \subseteq M \otimes N \rightarrow k$, so $k \mid M' \otimes N$. Tensoring with $M$, we obtain $M \mid M' \otimes N \otimes M \cong M' \oplus (\proj)$.

Somehow we deduce that $M= (\fd) \oplus (\proj)$. One way is to use an advanced version of the Krull-Schmidt Theorem (the right hand side is a sum of countably generated modules with local endomorphism rings), or see Exercise 7.

The same considerations apply to $N$, so we have $k \oplus (\proj) \cong M \otimes N = \overline{M} \otimes \overline{N} \oplus (\proj)$.
\end{proof}

The proof of Lemma~\ref{lem:invdual} now works for infinite dimensional modules too.

Here are some examples of calculations for finite $p$-groups in characterisitic $p$.

\noindent
$T(C_{p^n}) = \begin{cases} 0 \quad \mbox{if $p^n=2$} \\ \mathbb Z/2 \quad \mbox{if $p^n \ne 2$} \end{cases}$\\
$T(Q_8) = \begin{cases} \mathbb Z/4 \quad \mbox{if $k$ does not contain a cube root of unity} \\ \mathbb Z/4 \oplus \mathbb Z/2 \quad \mbox{if $k$ contains a cube root of unity} \end{cases}$\\
$T(Q_{2^n}) = \mathbb Z/4 \oplus \mathbb Z/2$ for $2^n \geq 16$, regardless of cube roots\\
$T(D_{2^n}) = \mathbb Z \oplus \mathbb Z$ for $2^n \geq 8$

Always one generator is $\Omega k$. For $Q_8$, when $k$ contains a cube root of unity $\omega$, there is a module described by $i= \left[ \begin{smallmatrix} 1 & 0&0 \\ 1 & 1&0 \\ 0&1&1 \end{smallmatrix} \right], j= \left[ \begin{smallmatrix} 1 & 0&0 \\ \omega & 1 &0 \\ 0& \omega^2 &1 \end{smallmatrix} \right]$. For $D_{2^n}$, another generator is the kernel of the natural map $k[D_{2^n}/C_2] \rightarrow k$, where $C_2$ is not central.

It is known that $T(G)$ is always a finitely generated group, but it can be very difficult to calculate. The classification of endotrivial modules for any finite $p$-group was a major achievement of Carlson and Th\'{e}venaz.

\begin{theorem}
If all maximal elementary abelian subgroups of the $p$-group $P$ have rank at least 3 then $T(P) \cong \mathbb Z$, generated by $\Omega k$.
\end{theorem}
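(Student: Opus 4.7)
The plan is to follow the Carlson--Th\'{e}venaz strategy of detecting endotrivial modules via restriction to elementary abelian subgroups. The essential input I would take as a black box is \emph{Dade's theorem}: for any elementary abelian $p$-group $E$ of rank at least $2$, $T(E) \cong \mathbb{Z}$, generated by $[\Omega k]$. Its proof uses cyclic shifted subgroups of $kE$ together with constant-Jordan-type invariants. Granted this, I would form the restriction map
\[
\rho \colon T(P) \longrightarrow \prod_{E \in \mathcal{E}} T(E) \cong \prod_{E \in \mathcal{E}} \mathbb{Z},
\]
where $\mathcal{E}$ is the set of maximal elementary abelian subgroups of $P$, each of rank $\geq 3$ by hypothesis. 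The class $[\Omega k] \in T(P)$ maps to the diagonal element $(1,1,\ldots)$, so the composition $\mathbb{Z} \to T(P) \xrightarrow{\rho} \prod \mathbb{Z}$ via $r \mapsto [\Omega^r k]$ is already injective, and it suffices to show that the image of $\rho$ lies in the diagonal and that $\rho$ itself is injective.

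For the image, given an endotrivial module $M$, Dade's theorem produces integers $r_E$ with $M \downarrow_E \simeq \Omega^{r_E} k$. I would argue that all the $r_E$ coincide via a connectedness argument on the poset of elementary abelian subgroups of $P$ of rank $\geq 2$: under the hypothesis that every maximal elementary abelian has rank $\geq 3$, this poset is connected, and Dade applied to any rank-$\geq 2$ subgroup $F \leq E \cap E'$ (or to consecutive rank-$\geq 2$ subgroups along a connecting chain) forces $r_E = r_{E'}$. Writing $r$ for this common value and replacing $M$ by $M \otimes (\Omega^r k)^*$, we are reduced to the case $M \downarrow_E \simeq k$ for every $E \in \mathcal{E}$.

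The main obstacle is the final step: showing that such an $M$ satisfies $[M] = 0$ in $T(P)$, i.e., $\ker \rho = 0$. The rank $\geq 3$ hypothesis is essential here, since the low-rank examples $T(C_{p^n})$, $T(Q_{2^n})$ and $T(D_{2^n})$ displayed above exhibit genuine torsion (or extra $\mathbb{Z}$ factors) whose restrictions to maximal elementary abelians may become trivial. To close the argument I would work in the stable module category and use support varieties: $M$ endotrivial forces $V_P(M) = V_P(k)$, while Carlson's rank variety description combined with Chouinard-style detection shows that triviality of every restriction $M \downarrow_E$ propagates along the full Quillen stratification, provided no low-rank elementary abelian is maximal. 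The rank $\geq 3$ assumption is precisely what excludes the pathological summands and lets one conclude that $M \simeq k$, completing the proof.
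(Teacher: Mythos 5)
The paper itself gives no proof of this theorem; it is stated as a known result of Carlson and Th\'{e}venaz, with the reader referred to \cite{thevenaz} for details, so there is no internal argument to compare against.

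Your outline reproduces the correct skeleton of the Carlson--Th\'{e}venaz strategy up to a point: Dade's theorem for elementary abelians of rank $\geq 2$, the restriction map $\rho \colon T(P) \to \prod_E T(E)$, and the observation that connectedness of the poset of rank-$\geq 2$ elementary abelian subgroups forces the image of $\rho$ into the diagonal are all genuine ingredients. Where the sketch breaks down is the injectivity of $\rho$, and the tools you invoke cannot close that gap. Support varieties carry no information here: \emph{every} endotrivial $kP$-module $M$ automatically has $V_P(M) = V_P(k)$ (endotrivial modules have full support), so the equation $V_P(M) = V_P(k)$ is tautological and cannot separate distinct elements of $T(P)$. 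Likewise Chouinard's theorem and rank varieties detect \emph{projectivity}, not stable triviality; once you have reduced to $M \downarrow_E \simeq k$ for every elementary abelian $E$, no projectivity criterion will by itself yield $M \simeq k$. What $\ker\rho = 0$ actually amounts to is the Carlson--Th\'{e}venaz theorem that the torsion subgroup $TT(P)$ vanishes whenever $P$ is not cyclic, generalized quaternion, or semi-dihedral. That is a long and delicate argument involving restriction to maximal subgroups, inductive reductions through central quotients, and a separate treatment of extraspecial and almost extraspecial groups; it is emphatically not a corollary of support-variety theory. As written, the final paragraph of your proposal asserts the hard part rather than proving it.
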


\begin{theorem} 
Suppose that the $p$-group $P$ has at least one maximal elementary abelian subgroup of rank 2 and $P$ is not semi-dihedral. Then $T(P)$ is free abelian on $r$ generators, where $r$ is defined by letting $c$ be the number of conjugacy classes of elementary abelian subgroups of rank 2 and setting $r=c$ if $\rank (P)=2$ and $r=c+1$ if $\rank (P) >2$.
\end{theorem}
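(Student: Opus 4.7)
The plan is to bound $T(P)$ from above by a detection theorem for restrictions to maximal elementary abelian subgroups of rank $2$, and from below by constructing enough explicit, linearly independent endotrivial modules.

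For the upper bound, observe that restriction preserves endotriviality, so for each elementary abelian $E\le P$ we have $\res^P_E\colon T(P)\to T(E)$. For $E$ of rank $2$, $T(E)\cong\mathbb Z$ (generated by $\Omega_E k$, by Dade's theorem on endotrivial modules for abelian $p$-groups). Collecting the restrictions to a chosen set $E_1,\dots,E_c$ of conjugacy class representatives of \emph{maximal} rank-$2$ elementary abelians yields
\[
\rho\colon T(P)\longrightarrow \mathbb Z^c,
\]
which, when $\rank P>2$, I would extend to $\mathbb Z^{c+1}$ by adjoining an ``$\Omega$-degree'' coordinate (defined on $[\Omega^rk]$ as $r$, and shown to extend to all of $T(P)$ once one verifies it respects tensor products modulo projectives). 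The first main step is to prove that $\rho$ is injective with image of finite index; injectivity uses a detection theorem asserting that an endotrivial $kP$-module $M$ with $\res^P_E M\simeq k$ in $T(E)$ for every rank-$2$ maximal elementary abelian $E\le P$ must already be equivalent to $\Omega^r k$ for some $r$.

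For the lower bound, I need $c$ (respectively $c+1$) elements of $T(P)$ whose $\rho$-images span a full-rank sublattice of the target. For each $E_i$ I would construct an endotrivial module $N_i$ whose $\rho$-image is nontrivial essentially only on the $i$th coordinate. Natural candidates are relative syzygies $\Omega_{P/H_i}k$, defined by the short exact sequence $0\to \Omega_{P/H_i}k\to k[P/H_i]\to k\to 0$, with $H_i$ chosen so that $\res^P_{E_i}$ picks up a nontrivial class in $T(E_i)$ while $\res^P_{E_j}$ for $j\ne i$ does not (possibly after modification by an $\Omega$-shift or by passing to an indecomposable summand). In the $\rank P>2$ case, $[\Omega k]$ supplies the remaining generator, which is independent of the $N_i$ by the extra $\Omega$-degree coordinate.

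The main obstacle is the injectivity of $\rho$. One must show that an endotrivial $kP$-module trivial on every rank-$2$ maximal elementary abelian subgroup is, up to the $\Omega$-line, trivial in $T(P)$. The exclusion of the semi-dihedral case enters precisely here: semi-dihedral $2$-groups are the unique family in which this detection step acquires a nontrivial torsion kernel, so that $T(P)$ gains a torsion summand and fails to be free abelian. A secondary difficulty is arranging the construction of the $N_i$ uniformly; in practice this is carried out by a case analysis guided by the classification of $p$-groups of rank $2$ (cyclic, dihedral, semi-dihedral, generalized quaternion, modular maximal-cyclic for $p=2$; metacyclic and a few sporadic families for $p$ odd) and of the possible embeddings of several such pieces inside a larger $p$-group with a prescribed set of rank-$2$ maximal elementary abelians.
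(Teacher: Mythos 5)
The paper does not prove this theorem. It is stated as a known result of Carlson and Th\'{e}venaz, with the reader referred to the survey \cite{thevenaz}; there is therefore no proof in the paper to compare your proposal against.

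That said, your outline does capture the broad shape of the Carlson--Th\'{e}venaz argument: detect endotrivial modules by restriction to elementary abelian subgroups, produce explicit classes via relative syzygies $\Omega_{P/H}k$, and isolate the semi-dihedral case as the source of torsion. Two points, though, are off in a way that matters. First, your ``$\Omega$-degree coordinate'' is not how the extra rank arises when $\rank P > 2$. In the actual classification one maps $T(P)$ to the product of $T(E)\cong\mathbb Z$ over \emph{all} conjugacy classes of maximal elementary abelian subgroups, not just those of rank $2$; the maximal elementary abelians of rank $\geq 3$ all lie in a single component of the relevant part of the Quillen complex (this is a theorem of Quillen, used crucially here), so their coordinates are linked and collectively contribute exactly one further $\mathbb Z$. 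When $\rank P = 2$ there are no such subgroups, hence $r=c$; when $\rank P > 2$ they contribute the $+1$. Recovering this from an abstractly defined $\Omega$-degree on $T(P)$ is not how it goes, and it is not clear you could even define such a coordinate independently of the restrictions. Second, the injectivity of the detection map modulo torsion, the identification of the torsion subgroup (and its vanishing for non-semi-dihedral $P$), and the construction of the classes $N_i$ with prescribed restrictions are precisely the hard content of the theorem, and your proposal treats each of these as a black box. So the strategy is correctly identified in outline, but none of the genuinely difficult steps is supplied, and the bookkeeping of the extra coordinate is structured incorrectly.
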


The other cases are dealt with separately. Extraspecial and almost extraspecial groups are particularly tricky. For more background and details see the survey article \cite{thevenaz}.

\begin{exercises}
Here $G$ is a finite group and $kG$-modules are finite dimensional unless stated otherwise.
\begin{enumerate}
\item
Show that the natural map $M \otimes _k M^* \rightarrow \End_k(M)$, $n \otimes f \mapsto ( m \mapsto f(m)n)$ is equivariant for any $M$ and an isomorphism when $M$ is finite dimensional. Thus a module $M$ is endotrivial if and only if $\End_k(M) \cong k \oplus (\proj)$. This explains the terminology \emph{endotrivial}.

\item
If $M$ is endotrivial and $M \cong A \oplus B$ show that either $A$ is projective or $B$ is projective. Deduce that $M$ is of the form $(\indec) \oplus(\proj)$.

\item
Verify that $(\proj) \otimes_k (\anything) = (\proj)$, even for infinite groups and modules. (Hint: see any textbook.)

\item
Verify that $\Hom _k ( (\proj), (\anything)) = ( \inj )$, even for infinite groups and modules. (Hint: $I$ is injective $ \Leftrightarrow$ $\Hom_{kG}(-,I)$ is exact; use $\Hom_{kG}(A,\Hom_k(B,C)) \cong \Hom_k(A \otimes _{kG} B, C)$.)

\item
If $M$ is an endotrivial $kG$-module, show that $X \mapsto X \otimes M$ defines an autoequivalence of the category $kG \mhyphen \Mod$.

\item
Let $G = H \times F$, where $F$ is a $p'$-group and $p$ divides the order of $H$. Show that $T(G)=T(H) \times \Hom_{\Grp}(F,k^{\times})$. Hint: if $[M] \in T(G)$ and $[\Res^G_HM]=[k]$ in $T(H)$, consider the Tate cohomology group $\widehat{H}^0(H;M)$ as a $kG$-module.

\item
Here $M$ may be infinite dimensional. Define $M_{\np}= \mho \Omega M$, where $\Omega$ and $\mho$ are calculated using the projective cover and the injective hull respectively. Show that $M_{\np}$ has no projective summands. Show that there is a natural map $M_{\np} \rightarrow M$, it is injective and the cokernel is projective. Thus $M \cong M_{\np} \oplus (\proj)$. Show also that $(M \oplus N)_{\np} \cong M_{\np} \oplus N_{\np}$. Use this to show that if $M$ is a summand of a module of the form $(\fd) \oplus (\proj)$ then $M$ is also of this form. 
\end{enumerate} 
\end{exercises}

\section{Groups of Type $\Phi$}
\label{sec:phi}

Now we allow groups to be infinite. 

\begin{definition}
The projective dimension of a $kG$-module $M$, denoted by $\projdim_{kG}$, is the length of the shortest possible projective resolution over $kG$, or $\infty$ if there is no resolution of finite length. The injective dimension is defined similarly.
\end{definition}

\begin{definition}
A group $G$ is said to be of type $\Phi$ (over $k$) if, for any $kG$-module $M$, the restriction $M \! \downarrow _F$ to any finite subgroup $F$ is of finite projective dimension then $M$ has finite projective dimension (over $kG$).
\end{definition}

Note that, since here we are taking $k$ to be a field of characteristic $p$, finite projective dimension over $F$ is equivalent to projective and in any case we only need to check restrictions to finite elementary abelian $p$-subgroups (Chouinard's Theorem). 

\begin{definition}
The finitistic dimension of $kG$ is $\findim kG = \sup \{ \projdim_{kG} M \mid \projdim_{kG} < \infty \}$.
\end{definition}

For groups of type $\Phi$, $\findim kG < \infty$. For otherwise, for each $i \in \mathbb N$, let $M_i$ be a $kG$-module with $i \leq \projdim_{kG}M < \infty$. Consider $M=\oplus _{i \in \mathbb N}M_i$; $M \! \downarrow _F$ is projective for any finite subgroup but $\projdim_{kG}M = \infty$.

\begin{proposition}
\label{prop:phi}
Suppose that the group $G$ acts on a contractible CW-complex of finite dimension by permuting the cells and with finite cell stabilizers. Then $G$ is of type $\Phi$.
\end{proposition}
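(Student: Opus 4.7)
The plan is to build a finite projective resolution of $M$ over $kG$ directly from the given $CW$-complex. Let $X$ be the contractible finite-dimensional $G$-$CW$-complex, of dimension $n$, and let $C_*(X;k)$ be its cellular chain complex. Because $X$ is contractible, the augmented complex
\[
0 \to C_n(X;k) \to C_{n-1}(X;k) \to \cdots \to C_0(X;k) \to k \to 0
\]
is an exact sequence of $kG$-modules. Since $G$ permutes the cells with finite stabilizers, each $C_i(X;k)$ decomposes as a direct sum $\bigoplus_\sigma k[G/G_\sigma]$, indexed by $G$-orbit representatives of $i$-cells $\sigma$, with $G_\sigma$ finite.

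Next I would tensor this augmented complex with $M$ over $k$. Since $k$ is a field, $- \otimes_k M$ preserves exactness, so the resulting complex
\[
0 \to C_n(X;k)\otimes M \to \cdots \to C_0(X;k)\otimes M \to M \to 0
\]
is an exact sequence of $kG$-modules (with the diagonal $G$-action on each tensor product). Using the standard tensor identity $k[G/H] \otimes_k M \cong \Ind_H^G \Res_H^G M$, each term rewrites as $\bigoplus_\sigma \Ind_{G_\sigma}^G \Res_{G_\sigma}^G M$.

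Now I invoke the hypothesis on $M$. For each finite subgroup $F$, $M\!\downarrow_F$ has finite projective dimension; but $kF$ is self-injective (Frobenius), so finite projective dimension forces $M\!\downarrow_F$ to be projective, as the paper already notes. Thus $\Res_{G_\sigma}^G M$ is projective over $kG_\sigma$ for every cell $\sigma$. Since induction along a subgroup inclusion sends projectives to projectives ($kG$ is free as a right $kG_\sigma$-module), each $\Ind_{G_\sigma}^G \Res_{G_\sigma}^G M$ is projective over $kG$, and hence so is every $C_i(X;k)\otimes M$. The exact sequence above is therefore a projective resolution of $M$ of length at most $n$, giving $\projdim_{kG} M \leq n < \infty$.

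The only slightly delicate point is the tensor identity $k[G/H] \otimes_k M \cong \Ind_H^G \Res_H^G M$; the isomorphism sends $(gH)\otimes m$ to $g \otimes g^{-1}m$, and one checks it is $G$-equivariant and natural. Everything else is routine, so the main conceptual content is simply recognizing that the chain complex of $X$, after tensoring with $M$, becomes a projective resolution of $M$ precisely because the cell stabilizers are finite and $M$ restricts to projectives on them.
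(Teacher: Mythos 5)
Your proof is correct and follows essentially the same approach as the paper: tensor the augmented cellular chain complex with $M$, use the tensor identity $k[G/H]\otimes_k M \cong \Ind_H^G\Res_H^G M$, and observe that the terms become projective because $M$ restricts to a projective module over each finite cell stabilizer. You have simply made explicit the steps the paper leaves to the reader.
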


\begin{proof}
Let $X$ be the CW-complex, with cellular chain complex $C_*(X)$. 

\[ \begin{tikzcd} C_n(X) \ar[r] & \cdots \ar[r] & C_1(X) \ar[r] & C_0(X) \ar[d] \\
&&& k
\end{tikzcd} \]
Each $C_i(X)$ is a sum of permutation modules $k \! \uparrow _F^G$, $F$ finite (the notation means induced module). Tensor with $M$.
\[ \begin{tikzcd} C_n(X) \otimes M \ar[r] & \cdots \ar[r] & C_1(X) \otimes M \ar[r] & C_0(X) \otimes M \ar[d] \\
&&& M
\end{tikzcd} \]
Since $k \! \uparrow _F^G \otimes M \cong M \! \downarrow_F \! \uparrow _F^G$, if each $M \! \downarrow _F$ is projective then this is a projective resolution of $M$.
\end{proof}

\begin{definition}
A group $G$ is of finite virtual cohomological dimension over $k$ (finite $\vcd_k$) if it has a subgroup $H$ of finite index such that $\projdim_{kH}k < \infty$. 
\end{definition}

Note that this subgroup $H$ can contain no $p$-torsion.

\begin{theorem}
If $G$ is of finite $\vcd$ then it is of type $\Phi$. 
\end{theorem}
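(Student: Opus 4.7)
The plan is to reduce to Proposition~\ref{prop:phi}: it suffices to construct a finite-dimensional contractible $G$-CW-complex on which $G$ acts with finite cell stabilizers.

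Let $H\le G$ be of finite index with $\projdim_{kH}k=n<\infty$. Replacing $H$ by its normal core, I may assume $H\unlhd G$; this preserves finiteness of $\projdim_{kH}k$ because $kH$ is free over any finite-index subalgebra, so a projective $kH$-resolution restricts to a projective resolution of the same length. The hypothesis forces $H$ to be $p$-torsion-free, so every finite subgroup of $H$ is a $p'$-group. Using the $kH$-linear isomorphism $kH\otimes_k V\cong kH\otimes_k V_{\mathrm{triv}}$ given by $h\otimes v\mapsto h\otimes h^{-1}v$, a $kH$-projective module tensored over $k$ with any vector space remains $kH$-projective; tensoring a length-$n$ projective resolution of $k$ with an arbitrary $kH$-module $V$ therefore yields a length-$n$ projective resolution of $V$, so $\mathrm{gl.dim}(kH)\le n$.

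The main step is to build a finite-dimensional contractible $G$-CW-complex $X$ with finite cell stabilizers. From a finite-dimensional contractible proper $H$-CW-model, one assembles $X$ using the finite quotient $Q=G/H$; the cell stabilizers of $X$ are then extensions of finite $p'$-subgroups of $H$ by subgroups of $Q$, hence finite, and $X$ is contractible of finite dimension. Proposition~\ref{prop:phi} applied to $X$ then gives that $G$ is of type $\Phi$.

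The main obstacle is the construction of the proper $H$-CW-model. When $H$ is torsion-free this is classical via Eilenberg--Ganea/Stallings--Swan; in our generality $H$ may contain $\ell$-torsion for primes $\ell\ne p$, so $\cd_\Z H$ may be infinite even though $\mathrm{gl.dim}(kH)$ is finite. What is actually required for the argument of Proposition~\ref{prop:phi} is only an algebraic shadow of such a geometric model: a finite-length $kG$-resolution of $k$ by direct sums of permutation modules $k[G/F]$ with $F\le G$ finite. Producing this resolution from the finite $kH$-resolution of $k$ by inducing and splicing over the finite quotient $G/H$ is the technical heart of the proof, and finiteness of $\mathrm{gl.dim}(kH)$ together with the $p$-torsion-freeness of $H$ is exactly what makes it possible.
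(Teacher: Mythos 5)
Your proposal follows the same route the paper takes, which is really just a pointer to the literature: reduce to Proposition~\ref{prop:phi} (or its algebraic analogue from Exercise~2 of Section~2) by producing a finite-length resolution of $k$ by direct sums of permutation modules $k\!\uparrow^G_F$ with $F$ finite; over $\mathbb Z$ this comes from Serre's construction of a finite-dimensional contractible proper $G$-CW-complex, and over a general $k$ one replaces the geometric model by an algebraic one (Brown, Swan). Your preliminary reductions are all correct and worth spelling out: the passage to the normal core costs nothing since restriction to a finite-index subgroup preserves projectives; $H$ must be $p$-torsion-free because projective dimension does not increase on restriction; and the untwisting isomorphism $kH\otimes_k V\cong kH\otimes_k V_{\mathrm{triv}}$ upgrades $\projdim_{kH}k<\infty$ to $\mathrm{gl.dim}(kH)<\infty$. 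You also correctly identify the real obstruction to the na\"ive geometric argument, namely that $H$ may have $\ell$-torsion for $\ell\neq p$, so $\cd_{\mathbb Z}H$ can be infinite and Eilenberg--Ganea/Stallings--Swan does not apply.

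Where the proposal falls short of a proof is exactly where you flag it: the assertion that one can produce a finite $kG$-resolution of $k$ by permutation modules on finite subgroups ``by inducing and splicing over the finite quotient $G/H$'' is stated, not carried out, and it is not a routine splice. This construction is the entire content of the theorem; it is precisely what the paper attributes to Serre, Brown and Swan rather than reproving. As written, your argument reduces the theorem to an unproved claim of comparable depth. If you want a self-contained proof you need to actually build the permutation resolution (or prove directly, as in \cite{brown, swan}, that a $kG$-module which is $kH$-projective has bounded $kG$-projective dimension, and then feed in the observation that the hypothesis on $M\!\downarrow_F$ for finite $F\leq G$ forces $M\!\downarrow_H$ to be $kH$-projective because $H$ is $p$-torsion-free). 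Otherwise, cite the sources, as the paper does.
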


This theorem is essentially due to Serre, see \cite{brown}. When $k= \mathbb Z$ you actually construct a CW-complex as in Proposition~\ref{prop:phi}. Otherwise you use algebraic methods to construct an analogue of the associated chain complex, see \cite{brown, swan}.

It follows that groups such as $\Sl_n(\mathbb Z)$ or, more generally, discrete subgroups of Lie groups with finitely many components have finite $\vcd$ (over $\mathbb Z$, hence over any $k$), see \cite{brown}.

However $(\mathbb Z/p)^{(\mathbb N)}$ and $\mathbb Q / \mathbb Z$ are not of finite $\vcd_k$, because there is no subgroup of finite index with no $p$-torsion. They can both act on a tree with finite stabilizers, so they are of type $\Phi$. On the other hand, $\mathbb Z^{(\mathbb N)}$ is not even of type $\Phi$ (Exercise 4 of this section).

We now define a category $\ModProj(kG)$ in which all the projective modules are identified with 0. We say that a morphism $f \! : M \rightarrow N$ factors through a projective if there is a projective module $P$ such that $f$ factors as a composition $M \rightarrow P \rightarrow N$. We define $\ModProj(kG)$ to have the same objects as $kG \mhyphen \Mod$, but $$\Hom _{\ModProj(kG)}(M,N)=\Hom _{kG}(M,N)/(\mbox{factors through a projective module}).$$ In this catgory the syzygy $\Omega M$ is well defined up to unique isomorphism (Schanuel's Lemma) and there is a map $\Omega  \!: \Hom _{\ModProj(kG)}(M,N) \rightarrow \Hom _{\ModProj(kG)}(\Omega M,\Omega N)$ that is well defined by the diagram

\[ \begin{tikzcd}
\Omega M \ar[r] \ar[d, dashed, "{\Omega f}"] & P_M \ar[r] \ar[d, dashed] & M \ar[d, "f"] \\
\Omega N \ar[r] & P_N \ar[r] & N.
\end{tikzcd} \]
Thus $\Omega$ becomes a functor from $\ModProj(kG)$ to itself.

Two $kG$-modules $M$ and $N$ are isomorphic in $\ModProj(kG)$ if and only if there exist projective modules $P$ and $Q$ such that $M \oplus P \cong N \oplus Q$ in $kG \mhyphen \Mod$ (exercise).

For finite groups, $\ModProj(kG)$ is considered to be the stable category, but for infinite groups it is not satisfactory, because not every module can be embedded in a projective module so we are unable to invert $\Omega$.

\begin{definition} Define the stable category of $kG$-modules to have the same objects as $kG \mhyphen \Mod$ and morphisms
$\Hom_{\Stab(kG)}(M,N)= \dirlim_{\Omega} \Hom _{\ModProj(kG)}(\Omega^r M,\Omega^r N)$. 
\end{definition}

For the rest of this section, all groups are of type $\Phi$ and there are no restrictions on the modules.

\begin{lemma}
\label{lem:spli}
Any injective $kG$-module has projective dimension at most $\findim (kG)$.
\end{lemma}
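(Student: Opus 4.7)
The plan is to show that for any injective $kG$-module $I$, the restriction $I\!\downarrow_F$ to every finite subgroup $F\le G$ has finite projective dimension over $kF$; then type $\Phi$ forces $\projdim_{kG} I<\infty$, whence the bound by $\findim(kG)$ is immediate from the definition.

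First I would establish that restriction from $G$ to a finite subgroup $F$ preserves injectives. The key observation is that since $F$ is finite, $kG$ is a free right $kF$-module (with basis any set of representatives for the left cosets $F\backslash G$). Hence the induction functor $\Ind_F^G=kG\otimes_{kF}-$ is exact. Because $(\Ind_F^G,\Res^G_F)$ is an adjoint pair via Frobenius reciprocity, the right adjoint $\Res^G_F$ carries injective $kG$-modules to injective $kF$-modules. Thus $I\!\downarrow_F$ is an injective $kF$-module.

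Next I would invoke the fact that $kF$ is self-injective (it is a Frobenius algebra for finite $F$), so injective $kF$-modules are projective. In particular $I\!\downarrow_F$ is projective, hence has projective dimension $0$, for every finite subgroup $F$. Applying the defining property of type $\Phi$ to $M=I$ then yields $\projdim_{kG}I<\infty$, and by the definition of the finitistic dimension this forces $\projdim_{kG}I\le \findim(kG)$.

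The only nontrivial point is the preservation of injectivity under restriction to $F$, and this is mild: it rests purely on the freeness of $kG$ as a right $kF$-module (which holds because $F$ is finite, not because it has finite index in $G$). Everything else is an immediate appeal to the hypotheses already set up in the section — the definition of type $\Phi$, Chouinard-style reduction to finite subgroups, and the finiteness of $\findim(kG)$ proved for groups of type $\Phi$.
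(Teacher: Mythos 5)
Your proof is correct and follows the same route as the paper's: restriction to a finite subgroup preserves injectivity because its left adjoint (induction) is exact, injective equals projective over $kF$ by self-injectivity, and then the type $\Phi$ hypothesis together with the definition of $\findim(kG)$ gives the bound. You simply spell out the adjunction and the freeness of $kG$ as a right $kF$-module, which the paper leaves implicit.
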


\begin{proof}
The restriction of an injective module to any finite subgroup is injective (the left adjoint of restriction is induction, which is exact), which is equivalent to projective for a finite group. Now use the definition of type $\Phi$.
\end{proof}

The next proposition is from \cite{GG} and its proof is a little tricky, but it will be crucial later. If you are interested in what happens when you consider a more general ring $k$ than a field, this is where the noetherian condition is useful.

\begin{proposition}
\label{prop:silp}
Any projective $kG$-module has injective dimension at most $\findim (kG)$.
\end{proposition}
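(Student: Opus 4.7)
We wish to show $\Ext^{n+1}_{kG}(M, P) = 0$ for every $kG$-module $M$ and projective $P$, where $n = \findim kG < \infty$.

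The plan is to rewrite the problem in terms of group cohomology via the tensor--Hom adjunction. By Exercise~3, tensoring a projective $kG$-module with any $k$-module under the diagonal action yields a $kG$-projective; since $k$ is a field, $- \otimes_{k} M$ is exact, so for any projective resolution $Q_{\bullet} \to k$, the complex $Q_{\bullet} \otimes_{k} M \to M$ is a projective resolution of $M$ over $kG$. Applying $\Hom_{kG}(-, P)$ together with the adjunction $\Hom_{kG}(A \otimes_{k} B, C) \cong \Hom_{kG}(A, \Hom_{k}(B, C))$ yields the natural isomorphism
\[
\Ext^{i}_{kG}(M, P) \;\cong\; H^{i}\bigl(G, \Hom_{k}(M, P)\bigr).
\]

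Next, I would show that $\Hom_{k}(M, P)$ has finite $kG$-projective dimension. For any finite subgroup $F \le G$,
\[
\Hom_{k}(M, P)\!\downarrow_{F} \;\cong\; \Hom_{k}(M\!\downarrow_{F}, P\!\downarrow_{F}).
\]
Since $kF$ is Frobenius, $P\!\downarrow_{F}$ is $kF$-injective, and the adjunction $\Hom_{kF}(Z, \Hom_{k}(X, Y)) \cong \Hom_{kF}(Z \otimes_{k} X, Y)$ combined with the exactness of $- \otimes_{k} X$ over the field $k$ shows that $\Hom_{k}(M\!\downarrow_{F}, P\!\downarrow_{F})$ is $kF$-injective, and therefore $kF$-projective. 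By the type $\Phi$ hypothesis applied to $\Hom_{k}(M, P)$, this module has finite $kG$-projective dimension, hence at most $n$ by the definition of $\findim kG$.

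The hard part will be converting the bound $\projdim_{kG}\Hom_{k}(M, P) \le n$ into the desired vanishing $H^{i}\bigl(G, \Hom_{k}(M, P)\bigr) = 0$ for $i > n$: projective dimension controls $\Ext^{*}$ \emph{out of} a module, whereas group cohomology is $\Ext^{*}$ \emph{into} it. Bridging this asymmetry is the technical heart of the proof and, following the approach of \cite{GG}, requires the combined use of $\findim kG \le n$, Lemma~\ref{lem:spli}, and the noetherian hypothesis on $k$. A natural strategy is to use the projective resolution $0 \to R_{n} \to \cdots \to R_{0} \to \Hom_{k}(M, P) \to 0$ to reduce the problem to bounding $H^{i}(G, R_{j})$ for each projective term $R_{j}$, then reduce further (using the noetherian hypothesis on $k$ to commute Ext with direct sums) to the case of $kG$ itself, where one can exploit Lemma~\ref{lem:spli} and a dimension-shifting argument to close the loop.
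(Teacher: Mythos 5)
Your reduction to $H^{i}(G,\Hom_{k}(M,P))=0$ for $i>n$ is correct, and so is the observation that $\Hom_{k}(M,P)$ has $kG$-projective dimension at most $n$ (the restriction to each finite $F$ is $kF$-injective, hence $kF$-projective, and type $\Phi$ plus the definition of $\findim$ do the rest). But that projective-dimension bound is the wrong kind of information: it controls $\Ext^{*}_{kG}(\Hom_{k}(M,P),-)$, whereas you need to kill $\Ext^{*}_{kG}(k,\Hom_{k}(M,P))$, which is governed by the \emph{injective} dimension of $\Hom_{k}(M,P)$. And establishing finite injective dimension of $\Hom_{k}(M,P)$ is essentially the statement you are trying to prove (take $M=k$). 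The fall-back strategy you sketch does not escape this: splitting the projective resolution $0\to R_{n}\to\cdots\to R_{0}\to\Hom_{k}(M,P)\to 0$ into short exact sequences and running the long exact cohomology sequence only converts the question ``does $H^{i}(G,\Hom_{k}(M,P))$ vanish for $i>n$?'' into ``does $H^{i}(G,R_{j})$ vanish for $i$ large?'', i.e.\ into the injective dimension of the projectives $R_{j}$ --- which is the Proposition again. There is no dimension-shift in the second variable available from a \emph{projective} resolution of $\Hom_{k}(M,P)$, and Lemma~\ref{lem:spli} on its own (finite projective dimension of injectives) gives no handle on $\Ext^{*}_{kG}(k,kG)$. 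So the argument is circular at exactly the point you flagged as the ``technical heart.''

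The paper avoids the asymmetry entirely by building an explicit finite \emph{injective} resolution. The key ingredients are (i) the splitting $P\mid\Hom_{k}(kG^{*},P)$, obtained by applying $\Hom_{k}(-,P)$ to $k\hookrightarrow kG^{*}$, $1\mapsto\epsilon$, and using projectivity of $P$ to split the resulting surjection; (ii) $kG^{*}$ is injective and hence, by Lemma~\ref{lem:spli}, has a projective resolution $Q_{*}$ of length at most $\findim(kG)$; and (iii) $\Hom_{k}(Q,-)$ is injective whenever $Q$ is projective (Exercise~4 of Section~1). Applying the exact functor $\Hom_{k}(-,P)$ to $Q_{*}\to kG^{*}$ then yields a finite injective resolution of $\Hom_{k}(kG^{*},P)$, hence of its summand $P$. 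You do use the ``$\Hom_{k}(\proj,-)$ is injective'' fact locally over finite subgroups, but the place it is actually needed is globally over $kG$, applied to a projective resolution of $kG^{*}$; and $kG^{*}$, not $\Hom_{k}(M,P)$, is the module on which Lemma~\ref{lem:spli} should be brought to bear.
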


\begin{proof}
Let $P$ be a projective module. There is a map $k \rightarrow kG^*$ (the dual of $kG$) that sends 1 to the augmentation map $\epsilon : kG \rightarrow k$. Applying $\Hom_k(-,P)$ yields a surjection $\Hom_k(kG^*,P) \rightarrow P$, which splits since $P$ is projective. This allows us to replace $P$ by $\Hom_k(kG^*,P)$ (using the injective version of Exercise 5 of this section).

The module $kG^*$ is injective by Exercise 4 of Section 1, so has a projective resolution $Q_{*}$ of length at most $\findim (kG)$ by the previous lemma. Applying $\Hom_k(-,P)$ (which is exact), we obtain a quasi-isomorphism from $\Hom_k(kG^*,P)$ to $\Hom_k(Q_{*},P)$. Each $\Hom_k(Q_i,P)$ is injective, by the same exercise.
\end{proof}

\begin{exercises}
\begin{enumerate}
\item
Show that if $G$ is of type $\Phi$, then so is any subgroup. (Hint: consider induced modules.)
\item
In Proposition~\ref{prop:phi} we didn't really use the CW-complex, only its chain complex. Write down a version that only uses the existence of 
 an exact sequence of certain types of $kG$-modules.
\item
Show that if $G$ acts cellularly on a CW-complex of finite dimension $d$ and there is a number $e$ such that every stabilizer $H$ is of type type $\Phi$ and has $\findim (kH) \leq e$, then $G$ is of type $\Phi$ and $\findim (kG) \leq d+e$.
\item
Show that $\mathbb Z ^{(\mathbb N)}$ has infinite finitistic dimension, so is not of type $\Phi$. (Hint: projective dimension cannot increase when you restrict to a subgroup.)
\item
If $A \rightarrow B \rightarrow C$ is a short exact sequence of modules show that:\\
$\projdim A \leq \max \{ \projdim B, \projdim C-1 \}$,\\
$\projdim B \leq \max \{ \projdim A, \projdim C \}$,\\
$\projdim C \leq \max \{ \projdim A+1, \projdim B \}$,\\
$\projdim D \oplus E = \max \{ \projdim D, \projdim E \}$.\\
\end{enumerate}
\end{exercises}
 
\section{Complete Resolutions}

\begin{definition}
A complete resolution of a $kG$-module $M$ is a commutative diagram
\[ \begin{tikzcd}
\cdots Q_{n+1} \ar[r] \ar[d, equal] & Q_n \ar[r, "d_n"] \ar[d, equal] & Q_{n-1} \ar[r, "d_{n-1}"] \ar[d] & \cdots  \ar[r] & Q_1 \ar[r, "d_1"] \ar[d] & Q_0 \ar[r, "d_0"] \ar[d] & Q_{-1} \ar[r] & Q_{-2}  \cdots \\
\cdots P_{n+1} \ar[r] & P_n \ar[r] & P_{n-1} \ar[r] & \cdots  \ar[r] & P_1 \ar[r] & P_0  \ar[d] \\
&&&&&M,
\end{tikzcd} \] 
where the $P_i$ and $Q_i$ are projective, $P_{*}$ is a projective resolution of $M$ and $Q_{*}$ is acyclic (i.e.\ exact). The integer $n$ is called the coincidence index.

We also require the extra condition that $\Hom_{kG}(Q_{*},P)$ be acyclic for any projective module $P$.
\end{definition}

For $n=0$ this is the same as the definition used for the Tate cohomology of finite groups.

\begin{theorem}
For a group of type $\Phi$, any $kG$-module has a complete resolution with coincidence index at most $\findim (kG)$. Any two complete resolutions of the same module are chain homotopy equivalent. A homomorphism of modules induces a chain map between complete resolutions, unique up to chain homotopy.
\end{theorem}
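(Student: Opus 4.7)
The statement has three parts: existence of a complete resolution with coincidence index at most $d:=\findim(kG)$, homotopy uniqueness of such a resolution, and existence plus homotopy uniqueness of chain maps lifting module homomorphisms. The plan is to concentrate on the existence claim, since the remaining two will follow from the standard comparison-theorem technique once the building blocks are in place.

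For existence, take any projective resolution $P_*\to M$ and set $Q_i=P_i$ for $i\ge d$; this handles the positive part of the complete resolution together with the middle row $P_*\to M$ of the diagram. What remains is to construct an acyclic complex of projective modules below degree $d$, extending $\cdots\to P_{d+1}\to P_d\to P_{d-1}$ downward and fitting into the commutative diagram, in such a way that the full $Q_*$ also satisfies the total acyclicity condition that $\Hom_{kG}(Q_*,Q)$ is acyclic for every projective $Q$. The key input is that $N:=\ker(P_{d-1}\to P_{d-2})=\Omega^d M$ satisfies $\Ext^i_{kG}(N,Q)=0$ for all $i\ge 1$ and all projective $Q$: by dimension shifting $\Ext^i(N,Q)\cong\Ext^{i+d}(M,Q)$, and Proposition~\ref{prop:silp} bounds $\injdim Q\le d$, forcing those Ext groups to vanish. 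Call a module with this Ext-vanishing property \emph{Gorenstein projective}.

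The heart of the proof is to build, for $N$, a right coresolution $0\to N\to Q^{-1}\to Q^{-2}\to\cdots$ by projectives that remains exact after applying $\Hom_{kG}(-,Q)$ for every projective $Q$. Inductively this reduces to a single step: given a Gorenstein projective module $X$, produce a short exact sequence $0\to X\to Q\to X'\to 0$ with $Q$ projective and $X'$ again Gorenstein projective. That $X'$ is Gorenstein projective will fall out of the long exact $\Ext(-,Q')$-sequence of this SES, since both $X$ and $Q$ have the required vanishing. To produce the SES itself, I embed $X$ in the injective module $\Hom_k(kG,X)$ (injective by Exercise 4 of Section 1), invoke Lemma~\ref{lem:spli} (spli) to resolve that injective by a finite projective resolution of length at most $d$, and splice or pull back along the embedding in the style of Gedrich-Gruenberg~\cite{GG}.

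The main obstacle is exactly this inductive step, which is where both spli and silp are used together and where the Iwanaga-Gorenstein flavour of the setup becomes essential. Once the coresolution is in hand, splicing it with the truncated projective resolution of $M$ yields the desired complete resolution with coincidence index at most $d$, and the total acyclicity of $\Hom_{kG}(Q_*,Q)$ is automatic because every syzygy along the way is Gorenstein projective. Homotopy uniqueness of complete resolutions, and the existence and uniqueness up to homotopy of chain maps lifting a module homomorphism, then follow from the usual comparison theorem in two stages: on the positive halves, which are ordinary projective resolutions of the given modules, standard comparison gives unique-up-to-homotopy chain maps; on the negative halves, the existence and uniqueness of the extension is controlled by obstructions living in Ext groups that vanish by the Gorenstein projective property established along the coresolution.
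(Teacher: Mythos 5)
Your overall architecture matches the paper's --- split off the positive part as $Q_i = P_i$ for $i \geq d$, observe that $\Omega^d M$ has vanishing $\Ext^{\geq 1}$ against projectives via dimension-shifting and silp (Proposition~\ref{prop:silp}), build a projective coresolution of $\Omega^d M$ one step at a time, and derive the comparison statements from that Ext-vanishing --- so the strategic decomposition is sound. But there is a genuine gap in the one step you call ``the heart of the proof.'' You embed the Gorenstein projective $X$ into an injective $J$, resolve $J$ by projectives $R_* \to J$ of length $\leq d$, and (after the lifting argument your ``splice or pull back'' must really mean: since $\Ext^1_{kG}(X, \Omega J) \cong \Ext^d_{kG}(X, \Omega^d J) = 0$ with $\Omega^d J$ projective, the map $X \to J$ lifts injectively to $\iota \colon X \hookrightarrow R_0$) you set $X' = R_0 / X$ and claim that $X'$ is Gorenstein projective ``falls out of the long exact $\Ext(-,Q')$-sequence.'' It does not. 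The sequence $\cdots \to \Ext^i(R_0, Q') \to \Ext^i(X, Q') \to \Ext^{i+1}(X', Q') \to \Ext^{i+1}(R_0, Q') \to \cdots$ kills $\Ext^i(X', Q')$ only for $i \geq 2$. For $i = 1$ you need $\Hom(R_0, Q') \to \Hom(X, Q')$ to be surjective, i.e.\ that every map $X \to Q'$ into a projective extends along your chosen embedding $\iota$. That surjectivity \emph{is} the statement $\Ext^1(X', Q') = 0$, so the argument as written is circular, and neither silp nor spli hands you the extension property for free: it is a genuinely additional condition on a Gorenstein projective module, not a formal consequence of $\Ext^{\geq 1}(X, \mathrm{proj}) = 0$ plus one arbitrary embedding into one arbitrary projective.

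The paper sidesteps this entirely by embedding $M$, not $\Omega^d M$. One embeds $M \hookrightarrow I$ with $I$ injective and cokernel $N$, and runs the Horseshoe Lemma on projective resolutions of $M$ and $N$ to get a projective resolution of $I$; since $\projdim I \leq d$ (Lemma~\ref{lem:spli}, the ``spli'' half), the $d$-th kernel $\Omega^d I$ is projective, and the horseshoe hands you the short exact sequence $\Omega^d M \hookrightarrow \Omega^d I \twoheadrightarrow \Omega^d N$ for free. The crucial payoff is that the cokernel is again \emph{of the form} $\Omega^d(\text{something})$, so the Ext-vanishing for it is immediate by exactly the same dimension-shifting argument you used for $\Omega^d M$; no separate lifting or extension property needs to be verified, and the construction iterates cleanly. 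That the resulting $Q_*$ satisfies $\Hom_{kG}(Q_*, P)$ acyclic for projective $P$ is then proved once and for all (the paper's Lemma~\ref{lem:homX}, by induction on injective dimension using silp), which is also what feeds the comparison theorem for the last two sentences of the statement. Your final paragraph about obstruction groups is the right intuition, but it needs Lemma~\ref{lem:homX} (or the equivalent Ext-vanishing you were trying to establish) to actually be available. To repair your proof, replace the inductive step with the Horseshoe/syzygy-of-an-injective construction; the rest of your outline then goes through.
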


We will now sketch the construction of these complete resolutions. From now on all groups will be understood to be of type $\Phi$.

For the next result we will write $\Omega^n M$ to denote a module that is the $n-1$st kernel in some projective resolution of $M$, even though this is only defined up to projective summands.

\begin{lemma}
If $n \geq \findim (kG)$ then a module of the form $\Omega ^{n}M$ can be embedded in a projective module in such a way that the quotient is also of the form $\Omega^{n}N$, for some $kG$-module $N$.
\end{lemma}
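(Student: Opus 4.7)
The plan is to produce the required embedding by first embedding $M$ itself into an injective module and then applying $\Omega^n$ to the resulting short exact sequence, using Lemma~\ref{lem:spli} to ensure the middle term is projective.

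First, I would pick an embedding $M \hookrightarrow I$ with $I$ injective, yielding a short exact sequence $0 \to M \to I \to \mho M \to 0$. By Lemma~\ref{lem:spli}, $\projdim_{kG} I \leq \findim(kG) \leq n$. Next, I would invoke the horseshoe lemma to assemble compatible projective resolutions $P_* \to M$, $R_* \to I$, $P'_* \to \mho M$ with $R_i = P_i \oplus P'_i$ sitting in a short exact sequence of complexes $0 \to P_* \to R_* \to P'_* \to 0$. Passing to $(n-1)$-st kernels then produces a short exact sequence of $n$-th syzygies
\[ 0 \to \Omega^n M \to \Omega^n I \to \Omega^n(\mho M) \to 0, \]
where the syzygies are computed from these specific resolutions.

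The crucial step is to see that $\Omega^n I$ is projective. A minimal projective resolution of $I$ has length at most $\findim(kG) \leq n$, so its $n$-th syzygy is projective (and in fact zero if $n > \projdim I$); by Schanuel's lemma, the $n$-th syzygy coming from the horseshoe resolution $R_*$ differs from the minimal one only by a projective summand, hence is itself projective. Setting $Q := \Omega^n I$ and $N := \mho M$, the displayed sequence realises $\Omega^n M$ as a submodule of the projective module $Q$ with quotient of the required form $\Omega^n N$.

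The only non-routine point I anticipate is controlling the projectivity of $\Omega^n I$ when it is computed from the (non-minimal) horseshoe resolution rather than a minimal one, but this is handled cleanly by Schanuel's lemma once Lemma~\ref{lem:spli} has given us the bound $\projdim I \leq n$.
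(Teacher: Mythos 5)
Your proof is correct and follows essentially the same route as the paper: embed $M$ in an injective $I$, apply the Horseshoe Lemma to the resulting short exact sequence, and use Lemma~\ref{lem:spli} to conclude that $\Omega^n I$ is projective. The extra remark about Schanuel's lemma controlling the non-minimality of the horseshoe resolution is a point the paper leaves implicit, but it is the same argument.
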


\begin{proof}
Embed $M$ in an injective module $I$, with quotient $N$, say. By hypothesis, we already have the projective resolution $P_{*}$ of $M$ used to construct $\Omega^{n}M$ and we find a projective resolution $R_{*}$ of $N$. By the Horseshoe Lemma, these can be combined to give a projective resolution of $I$ such that all rows and columns in the following diagram are exact.
\[ \begin{tikzcd}
\Omega^{n}M \ar[r] \ar[d] & P_{n-1} \ar[r] \ar[d] & \cdots \ar[r] & P_0 \ar[r] \ar[d] & M \ar[d] \\ 
\Omega^{n}I \ar[r] \ar[d] & P_{n-1} \oplus R_{n-1} \ar[r] \ar[d] & \cdots \ar[r] & P_0 \oplus R_0 \ar[r] \ar[d] & I \ar[d] \\
\Omega^{n}N \ar[r]  & R_{n-1} \ar[r]  & \cdots \ar[r] & R_0 \ar[r] &  N  
\end{tikzcd} \]
By Lemma~\ref{lem:spli}, $\Omega^{n}I$ is projective.
\end{proof}

We can construct $Q_{*}$ as follows. Set $n = \findim (kG)$. For $i \geq n$ we take $Q_i=P_i$. Now apply the above lemma to $\Omega^{n}M$ to embed it in a projective module that we take as $Q_{n-1}$. The quotient is also of the form $\Omega^{n}N$ and we can keep repeating the process.

\begin{lemma}
\label{lem:homX}
The complex $\Hom_{kG}(Q_{*},X)$ is exact if $X$ has finite injective dimension or if $X$ is projective.
\end{lemma}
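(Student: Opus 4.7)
The plan is to handle the two cases separately. If $X$ is projective, then there is nothing to do: the defining axiom of a complete resolution is exactly that $\Hom_{kG}(Q_*,P)$ is acyclic for every projective $P$. So the real content lies in the case when $X$ has finite injective dimension, which I would reduce to the projective case by a double-complex argument combined with induction on $d = \injdim X$.

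For the base case $d=0$, so that $X=I$ is injective, Lemma~\ref{lem:spli} supplies a finite projective resolution $0 \to R_m \to \cdots \to R_0 \to I \to 0$ of length $m \leq \findim(kG)$. I would form the double complex $C^{p,q} = \Hom_{kG}(Q_p, R_q)$, indexed by $p \in \mathbb{Z}$ and $0 \leq q \leq m$. Every row (fixed $q$) is $\Hom_{kG}(Q_*, R_q)$, which is acyclic by the defining property of a complete resolution since $R_q$ is projective; because the complex is bounded in the vertical direction, exactness of all rows forces the total complex to be exact. On the other hand, every column (fixed $p$) is $\Hom_{kG}(Q_p, R_\bullet)$, and since $Q_p$ is projective the functor $\Hom_{kG}(Q_p,-)$ is exact, so this column is a resolution of $\Hom_{kG}(Q_p, I)$. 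Hence the total complex is quasi-isomorphic (up to shift) to $\Hom_{kG}(Q_*, I)$, and its exactness delivers exactness of $\Hom_{kG}(Q_*, I)$.

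For the inductive step, fix $d \geq 1$ and assume the result for modules of smaller injective dimension. Embed $X$ in an injective module to obtain a short exact sequence $0 \to X \to I \to Y \to 0$ with $\injdim Y \leq d-1$. Because every $Q_i$ is projective, applying $\Hom_{kG}(Q_i,-)$ preserves this sequence term by term, yielding a short exact sequence of complexes $0 \to \Hom_{kG}(Q_*, X) \to \Hom_{kG}(Q_*, I) \to \Hom_{kG}(Q_*, Y) \to 0$. The middle complex is exact by the base case and the right-hand complex by the inductive hypothesis, so the associated long exact homology sequence forces $\Hom_{kG}(Q_*, X)$ to be exact as well.

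The only delicate point is the convergence at the base case: one needs to know that a double complex bounded in one direction with uniformly exact rows in the other direction has exact total complex. This is a standard spectral-sequence fact, but it is exactly here that the finiteness from Lemma~\ref{lem:spli} is essential, since the $p$-direction of $C^{p,q}$ is doubly infinite. Once this is isolated, the rest of the argument is formal manipulation with short exact sequences.
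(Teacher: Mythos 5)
Your argument treats the acyclicity of $\Hom_{kG}(Q_*,P)$ for projective $P$ --- the ``extra condition'' in the definition of a complete resolution --- as a given, invoking it twice: once to dispose of the projective case outright, and again in your base case to conclude that the rows $\Hom_{kG}(Q_*,R_q)$ are acyclic. But the paper's point, stated immediately after this lemma, is that this extra condition holds \emph{automatically} for the $Q_*$ just constructed for groups of type $\Phi$; the lemma is precisely what is supposed to establish it. Read against that intent, your argument is circular.

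The paper avoids this by running the reduction in the opposite direction from yours: the projective case is deduced from the finite-injective-dimension case via Proposition~\ref{prop:silp} (a projective module has finite injective dimension), rather than reducing injectives to projectives via Lemma~\ref{lem:spli}. Moreover, the base case $\injdim X = 0$ needs no double complex at all: when $I$ is injective, the functor $\Hom_{kG}(-,I)$ is exact, so it carries the exact complex $Q_*$ to an exact complex. Your inductive step coincides with the paper's and is fine. If you replace your base case with that one-line observation and derive the projective case from Proposition~\ref{prop:silp} rather than from the definition of a complete resolution, the proof becomes self-contained and matches the paper's.
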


\begin{proof}
In view of Proposition~\ref{prop:silp}, the second part follows from the first. We prove the first by induction on the injective dimension of $X$. When this is 0 the result is clear. Otherwise we embed $X$ in an injective module to obtain a short exact sequence $X \rightarrow I \rightarrow Y$ with $Y$ of lower injective dimension. Consider the double complex of $\Hom_{kG}$ from $Q_{*}$ to the short exact sequence just produced, written so that $Q_{*}$ is horizontal. All the columns are exact, because the $Q_i$ are projective. Two of the rows are exact by induction, thus so is the remaining one, by a diagram chase.
\end{proof}  

This lemma shows that the extra condition in the definition of a complete resolution holds automatically in the case of groups of type $\Phi$. Thus we could have omitted it from the definition, but in more general contexts it should be part of the definition so we have included it here. This extra condition is very important, because we can use it to produce morphisms. It yields the remaining vertical arrows in the definition of a complete resolution. It is also what is needed to produce the maps used to show that any two complete resolutions are chain homotopy equivalent. For more details see \cite{ikenaga}.

If two acyclic complexes of projectives are chain homotopy equivalent then their kernels are isomorphic up to projective summands (Exercise 2).

\begin{definition}
From now on, given a $kG$-module $M$ and $i \in \mathbb Z$ we will use $\Omega^iM$ to denote the kernel of $d_{i-1}$ in a complete resolution of $M$. This is well defined as a module only up to projective summands and an isomorphism unique in $\ModProj(kG)$. For small non-negative $i$ it only agrees with the previous definition stably. In particular, we have negative syzygies.
\end{definition}

\begin{definition}
The modules that can occur as a kernel in an acyclic complex of projectives $Q_{*}$ such that $\Hom_{kG}(Q_{*},P)$ is acyclic for any projective module $P$ are called Gorenstein projective.
\end{definition}

Gorenstein projective modules have many nice properties (see the exercises). In many cases they are easy to recognise: for example, for a group of finite $\vcd_k$, a module is Gorenstein projective if and only if it projective on restriction to some subgroup of finite index (Exercise 3). 

Notice that $\Omega^0M$ is not the same as $M$. From the defining diagram for a complete resolution we see that there is a natural map $\Omega^0M \rightarrow M$; it is a stable isomorphism because we can see in the diagram that a large enough syzygy of it is just equality. We sometimes write this as $\epsilon : \tilde{M} \rightarrow M$. Note that $\tilde{M}$ is Gorenstein projective and $\epsilon$ is a stable equivalence. In particular, every module is stably equivalent to a Gorenstein projective module.

\begin{theorem}
\label{thm:cat}
For a group $G$ of type $\Phi$, the following functors are equivalences of categories and going around the loop is isomorphic to the identity.
\end{theorem}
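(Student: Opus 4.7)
The statement is missing the explicit diagram of functors, but from the setup the natural loop should involve $\Stab(kG)$, the full subcategory $\underline{\mathrm{GProj}}(kG) \subseteq \ModProj(kG)$ of Gorenstein projective modules, and the assignments $M \mapsto \tilde{M}$ (Gorenstein-projective replacement from the complete resolution) and the obvious localisation $\ModProj(kG) \to \Stab(kG)$. So my plan is to prove three things: (i) the restriction of $\Omega$ to $\underline{\mathrm{GProj}}(kG)$ is already an autoequivalence, (ii) the inclusion $\underline{\mathrm{GProj}}(kG) \hookrightarrow \ModProj(kG) \to \Stab(kG)$ is fully faithful and essentially surjective, and (iii) the composite going round the loop via $M \mapsto \tilde{M}$ is naturally isomorphic to the identity.

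First I would show that on Gorenstein projectives $\Omega$ has a two-sided inverse inside $\ModProj(kG)$. By the definition of Gorenstein projective, such a module sits as the kernel of some $d_i$ in a totally acyclic complex $Q_*$, so both $\Omega^{\pm 1}$ can be read off directly from $Q_*$; the extra condition $\Hom_{kG}(Q_*,P)$ acyclic (Lemma~\ref{lem:homX}) is exactly what lets one lift morphisms of Gorenstein projectives to chain maps of their totally acyclic complexes, unique up to homotopy. Hence $\Omega$ induces mutually inverse equivalences on $\underline{\mathrm{GProj}}(kG)$. Consequently the direct system defining $\Stab(kG)$, when restricted to Hom-sets between Gorenstein projective modules, consists of isomorphisms, so the canonical map
\[
\Hom_{\underline{\mathrm{GProj}}(kG)}(M,N) \longrightarrow \Hom_{\Stab(kG)}(M,N)
\]
is a bijection for $M,N$ Gorenstein projective. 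This handles fully faithfulness.

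Next I would handle essential surjectivity. The comment just before the theorem already produces, for every $M$, a stable equivalence $\epsilon \colon \tilde{M} \to M$ with $\tilde{M} = \Omega^0 M$ Gorenstein projective; this exhibits every object of $\Stab(kG)$ as isomorphic to one coming from $\underline{\mathrm{GProj}}(kG)$. Combined with the previous step this gives the equivalence $\underline{\mathrm{GProj}}(kG) \simeq \Stab(kG)$. For the loop, I would check that $M \mapsto \tilde{M}$ is a well-defined functor on $\Stab(kG)$: a morphism $f : M \to N$ lifts (uniquely up to homotopy) to a chain map between complete resolutions by the construction outlined after Lemma~\ref{lem:homX}, and restricting to the Gorenstein projective kernels produces $\tilde f \colon \tilde M \to \tilde N$. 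Naturality of $\epsilon$ then provides the isomorphism between the round-trip composition and the identity.

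The main obstacle I expect is the fully-faithful step, specifically controlling that a map $\tilde M \to \tilde N$ that factors through a projective in $\ModProj(kG)$ after many applications of $\Omega$ already factored through a projective before — equivalently, that $\Omega$ acts injectively on morphisms between Gorenstein projectives in $\ModProj(kG)$. This is where one needs the full force of the extra condition $\Hom_{kG}(Q_*,P)$ acyclic, together with finiteness of $\findim(kG)$ (which bounds the injective dimension of projectives via Proposition~\ref{prop:silp}); a morphism killed by $\Omega$ yields a null-homotopy in the relevant Hom complex, which one then transports back to produce a factorisation through a projective at the original level. Once this subtlety is handled, the remaining verifications (naturality, compatibility with $\Omega$) are formal diagram-chases on complete resolutions.
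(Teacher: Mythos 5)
The paper itself does not supply a proof of Theorem~\ref{thm:cat}: it states the theorem together with a diagram of four categories and four functors, explains the constructions, and then in the following Remark simply credits the equivalences to Buchweitz \cite{buchweitz}. So there is no proof in the text to compare against; I will judge your proposal on its own terms.

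Your reconstruction of the loop is incomplete, and this is a genuine gap rather than a choice of route. The paper's diagram has four vertices: the stable module category $(\mbox{$kG$-modules}, \Hom_{\Stab(kG)})$; the Gorenstein projectives under $\Hom_{\ModProj(kG)}$; the Verdier quotient $D^b(kG\mhyphen\Mod)/D^b(kG\mhyphen\Proj)$; and totally acyclic complexes of projectives up to chain homotopy. You only discuss the first two, collapsing the composite $\Omega^0 \circ (\mbox{complete resolution}) \circ (\deg 0)$ into the single assignment $M \mapsto \tilde{M}$. That composite is indeed $M \mapsto \tilde{M}$, but the theorem asserts that \emph{each arrow} in the loop is an equivalence. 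In particular you would still need: that every object of $D^b(kG\mhyphen\Mod)/D^b(kG\mhyphen\Proj)$ is isomorphic to a shift of a module concentrated in degree $0$ (this uses the type $\Phi$ hypothesis to replace a bounded complex by a single syzygy); that the complete resolution functor descends to the quotient, i.e.\ that bounded complexes of projectives are sent to contractible totally acyclic complexes, and is essentially surjective and fully faithful; and that taking the degree-$0$ kernel of a totally acyclic complex is an equivalence onto the Gorenstein projective stable category. These are precisely the steps where the Buchweitz-style argument does its work, and skipping them leaves most of the theorem unproved.

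For the part you do address — that the inclusion of Gorenstein projectives into $\Stab(kG)$ is an equivalence — the strategy is correct. The key observation that $\Omega$ restricts to an autoequivalence of Gorenstein projectives inside $\ModProj(kG)$, so that the transition maps in the colimit defining $\Hom_{\Stab(kG)}$ are already isomorphisms, is the right mechanism, and your flagging of the injectivity of $\Omega$ on morphisms as the delicate point is apt: it is exactly there that the acyclicity of $\Hom_{kG}(Q_{*},P)$ for $P$ projective (Lemma~\ref{lem:homX}) and the bound on $\findim(kG)$ are used. You should also make explicit that $\Omega^{-1}$ on Gorenstein projectives is well defined in $\ModProj(kG)$ only because chain-homotopy-equivalent totally acyclic complexes have kernels that agree up to projective summands (Exercise 2(b) of Section 3); your argument quietly assumes this.
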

\vspace{-1.5em}
\[ \begin{tikzcd}
\left(\mbox{$kG$-modules}, \ \Hom_{\Stab(kG)}\right) \ar[d, "\deg 0"] && \left(\mbox{Gorenstein projective $kG$-modules},\ \Hom_{\ModProj(kG)}\right) \ar[ll, "\mbox{\footnotesize{inclusion}}"'] \\
D^b(kG \mhyphen \Mod)/D^b(kG \mhyphen \Proj) \ar[rr, "\mbox{\footnotesize{complete}}", "\mbox{\footnotesize{resolution}}"'] && \left(\mbox{acyclic complexes of projectives},\ \mbox{chain homotopy}\right) \ar[u, "\Omega^0"]
\end{tikzcd} \]

Here $\deg 0$ means that we consider a module as a complex consisting of just that module in degree 0 and 0 elsewhere. The complete resolution of a bounded complex is constructed in a way similar to that of a module. The complex has an ordinary projective resolution. We start constructing the complete resolution starting with $\Ker(d_{n-1})$ in this projective resolution, where $n$ is bigger than $\findim (kG)$ plus the degree of the top non-zero term of the complex.

$D^b(kG \mhyphen \Mod)$ is the derived category of bounded complexes of $kG$-modules. It is easy to see that we get the same category if we allow complexes that are only bounded on the right (in the direction of the arrows) but have only finitely many non-zero homology groups. $D^b(kG \mhyphen \Proj)$ is the derived category of bounded complexes of projective modules; these modules may be infinitely generated,  so this is not what is called the category of perfect complexes. This is equivalent to $K^b(kG \mhyphen \Proj)$, where the morphisms are taken up to chain homotopy.

We consider any of these categories to be the stable module category of $kG$.

Apart from the Gorenstein projective modules, the other three categories in this theorem have a natural triangulated structure. The shift functor is shift in degree for the categories on the bottom row and $\Omega^{-1}$ on the top row. These equivalences preserve the triangulated structure and the tensor product over $k$. See \cite{happel} for more background on derived and triangulated categories.

\begin{remark}
The equivalence of these categories is originally due to Buchweitz \cite{buchweitz}, although the context is slightly different. It should be clear from the proof that these equivalences still hold when $kG$ is replaced by any ring such that the projective length of any injective module is finite and the injective length of any projective module is finite. In fact, these two conditions are also necessary \cite{beligiannis} and for a group ring $kG$, for \emph{any} group $G$, one implies the other \cite{emmanouil}.

Complete resolutions appeared earlier, in the context of Tate-Farrell cohomology; see \cite{brown, ikenaga}.
\end{remark}

We conclude this section with a couple of lemmas that will be useful later.

\begin{lemma}
\label{lem:GPdom}
If $M$ and $N$ are $kG$-modules with $M$ Gorenstein projective then any stable map $f \! :M \rightarrow N$ can be realized as a genuine homomorphism of modules.
\end{lemma}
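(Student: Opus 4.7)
The plan is to lift a representative of $f$ to a chain map between complete resolutions, and then extract the required genuine homomorphism $M\to N$ from its degree-zero data.

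By the definition of $\Stab(kG)$, the stable map $f$ is represented by some morphism $\bar{f}\colon \Omega^r M\to \Omega^r N$ in $\ModProj(kG)$ with $r\geq 0$; fix an honest homomorphism $\bar{f}_0$ representing $\bar{f}$. Because $M$ is Gorenstein projective, $M$ appears as a kernel in some acyclic complex of projectives satisfying the extra condition, and by reindexing I may take this complex as the complete resolution $Q_\ast$ of $M$ in such a way that $M=\Omega^0 M=\ker(d_{-1})$ literally, with $\epsilon_M=\Id$. For $N$, I take any complete resolution $Q'_\ast$ with its stable equivalence $\epsilon_N\colon \tilde{N}\to N$.

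Next I extend $\bar{f}_0$ to a chain map $\phi\colon Q_\ast\to Q'_\ast$. Extension in one direction is immediate from the projectivity of the $Q_i$; extension in the opposite direction uses precisely the acyclicity of $\Hom_{kG}(Q_\ast,P)$ for projective $P$, which is built into the definition of a complete resolution (Lemma~\ref{lem:homX} in our setting). Once $\phi$ is in hand, it induces a map $\phi_0\colon M=\Omega^0 M\to \Omega^0 N=\tilde{N}$, and I set $g:=\epsilon_N\circ\phi_0\colon M\to N$, a genuine module homomorphism.

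To verify that $g$ realizes $f$ in $\Stab(kG)$, I note that any chain map of complete resolutions lifting $g$ must agree with $\phi$ up to chain homotopy (by the uniqueness statement in the complete-resolution theorem), so its degree-$r$ component agrees with $\bar{f}_0$ modulo morphisms factoring through projectives. Hence $[g]$ and $f$ have the same image in the colimit defining $\Hom_{\Stab(kG)}(M,N)$ and are therefore equal. The main obstacle is the construction of $\phi$: that is the step which uses in an essential way the extra condition in the definition of complete resolutions, while the choice of complete resolution of $M$ with $\tilde M = M$ and the final composition with $\epsilon_N$ are formal consequences of Gorenstein projectivity and the stable equivalence $\tilde N \to N$.
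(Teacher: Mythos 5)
Your proof is correct and takes essentially the same route as the paper, which simply refers to Exercise 6 of Section 3 (in effect, condition (f), that $\Omega^\infty\colon\Hom_{\ModProj}(M,N)\to\Hom_{\Stab(kG)}(M,N)$ is an isomorphism when $M$ is Gorenstein projective). You have in effect written out the surjectivity half of that exercise via complete resolutions, with the crucial step being the leftward extension of the chain map, which does indeed rest on the acyclicity of $\Hom_{kG}(Q_*,P)$ for projective $P$.
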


\begin{proof}
See Exercise 6 of this section.
\end{proof}
 
Now that we are working in a triangulated category we can formulate a very useful property of groups of type $\Phi$. 

Recall that if  $ X \stackrel{f}{\rightarrow} Y \rightarrow  Z \rightarrow $  is a triangle then $f$ is an isomorphism if and only if $Z \simeq 0$.

\begin{lemma}
\label{lem:stabiso}
A stable morphism $f \! :X \rightarrow Y$ is a stable isomorphism if and only if $f \! \downarrow _P:X \! \downarrow _P \rightarrow Y \! \downarrow _P$ is a stable isomorphism for every finite (elementary abelian) $p$-subgroup $P$.
\end{lemma}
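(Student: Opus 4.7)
The plan is to fit $f$ into a distinguished triangle and reduce the statement to a statement about when an object is stably zero. Using the triangulated structure on $\Stab(kG)$ from Theorem~\ref{thm:cat}, complete $f$ to a triangle
\[ X \xrightarrow{f} Y \rightarrow Z \rightarrow \Omega^{-1}X. \]
Then $f$ is a stable isomorphism if and only if $Z \simeq 0$ in $\Stab(kG)$, and similarly $f\!\downarrow_P$ is a stable isomorphism if and only if $Z\!\downarrow_P \simeq 0$ in $\Stab(kP)$. Since restriction is an exact functor and hence takes projective resolutions to projective resolutions and complete resolutions to complete resolutions, it gives a triangulated functor $\Stab(kG) \to \Stab(kP)$, so passing from the triangle for $f$ to the triangle for $f\!\downarrow_P$ is legitimate.

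The next step is to identify what it means for an object to be stably zero. From the equivalence with acyclic complexes of projectives, an object $Z$ satisfies $Z \simeq 0$ in $\Stab(kG)$ exactly when its complete resolution is contractible, equivalently when some syzygy $\Omega^r Z$ is projective, equivalently when $\projdim_{kG} Z < \infty$. For a finite group $P$, the group algebra $kP$ has $\findim kP = 0$, so $Z\!\downarrow_P \simeq 0$ in $\Stab(kP)$ is equivalent to $Z\!\downarrow_P$ being projective.

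With this translation the statement becomes: $\projdim_{kG} Z < \infty$ if and only if $Z\!\downarrow_P$ is projective for every finite elementary abelian $p$-subgroup $P$. The forward direction is routine: a finite projective resolution of $Z$ over $kG$ restricts to a finite projective resolution of $Z\!\downarrow_P$ over $kP$, and finite projective dimension over $kP$ forces projectivity. The converse is precisely the defining property of a group of type $\Phi$, once one invokes Chouinard's theorem (as noted after the definition in Section~\ref{sec:phi}) to reduce checking projectivity on every finite subgroup to checking on finite elementary abelian $p$-subgroups.

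The main thing to be careful about is the identification of stably zero objects with modules of finite projective dimension, which relies on Theorem~\ref{thm:cat} and the fact that $\findim(kG) < \infty$ for groups of type $\Phi$; once this is in hand, the whole argument is short. One also needs to check that restriction respects the stabilization limit $\dirlim_{\Omega}$ used to define $\Hom_{\Stab(kG)}$, but this follows because restriction commutes with taking kernels of surjections from projectives.
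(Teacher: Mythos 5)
Your proof is correct and is exactly the argument the paper has in mind: complete $f$ to a triangle with cone $Z$, translate ``stably zero'' into ``finite projective dimension over $kG$'' (respectively ``projective over $kP$''), and then apply the defining property of type~$\Phi$ together with Chouinard's theorem. The paper's own proof is just the two-line compression of this; you have merely filled in the details it leaves implicit.
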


\begin{proof}
Complete the triangle with a third module $Z$. The definition of type $\Phi$ shows that $Z=0$.
\end{proof}

\begin{exercises}

\begin{enumerate}
\item
Show that if $Q_{*}$ is a complete resolution of $k$ then, for any module $M$, $Q_{*} \otimes k$ is a complete resolution of $M$.
\item
\begin{enumerate}
\item
Verify that two complete resolutions of the same module must be chain homotopy equivalent.
\item
Verify that two acyclic complexes of projective modules that are chain homotopy equivalent have isomorphic kernels up to projective summands, by an isomorphism the same in $\ModProj(kG)$ as the map induced by the chain map.
\end{enumerate}
\item
Suppose that $G$ is of finite $\vcd_k$. Show that the following conditions on a module $M$ are equivalent:
\begin{enumerate}
\item
$M$ is Gorenstein projective;
\item
for some subgroup $H$ of finite index, $M\! \downarrow_H$ is projective;
\item
for any subgroup $H$ of finite index and $p$-torsion free, $M\! \downarrow_H$ is projective.
\end{enumerate}
(Hint: Induction is the right adjoint of restriction for subgroups of \emph{finite index}, so there is an embedding $M \rightarrow M \! \downarrow_H \! \uparrow^G$; see \cite{brown} for more details, particularly for the last part.)
\item
If $G$ has a normal subgroup $N$ that is $p$-torsion free and both $G$ and $N$ are of type $\Phi$, show that inflation gives a well-defined functor on stable categories $\Stab(kG/N) \rightarrow \Stab(kG)$. Convince yourself that this will not work if $N$ contains $p$-torsion, even if $G$ is finite.
\item
\begin{enumerate}
\item
Show that (Gorenstein projective)$\otimes$(anything) = (Gorenstein projective).
\item
Show that Gorenstein projective and finite projective dimension implies projective.
\end{enumerate}
\item
For a group $G$ of type $\Phi$ and with $\findim (kG) \leq d$ and a $kG$-module $M$, show that the following conditions are equivalent:
\begin{enumerate}
\item
$M$ is Gorenstein projective;
\item
there is an exact sequence $0 \rightarrow M \rightarrow P_{m-1} \rightarrow \cdots \rightarrow P_0 \rightarrow X \rightarrow 0$, for some $m \geq d$, some $X$ and $P_i$ projective;
\item
there is a projective module $P$ such that $M \oplus P$ is Gorenstein projective;
\item
$M$ is a summand of a Gorenstein projective module;
\item
for all $m,n \geq 0$ and all modules $N$, the map\\ $\Omega^m: \Hom_{\ModProj}(\Omega^nM, \Omega^nN) \rightarrow  \Hom_{\ModProj}(\Omega^{m+n}M, \Omega^{m+n}N)$\\ is an isomorphism;
\item
for all modules $N$, the map $\Omega^\infty: \Hom_{\ModProj}(M,N) \rightarrow  \Hom_{\Stab(kG)}(M,N)$ is an isomorphism; 
\item
for all projective modules $P$ and all $i \geq 1$, $\Ext^i_{kG}(M,P)=0$;
\item
$M \otimes N$ is projective whenever $\projdim N < \infty$.
\end{enumerate}
\item
Let $Q_{*}$ be a complete resolution of $M$ and define the complete $\Ext$-groups by\\ $\widehat{\Ext}^i_{kG}(M,N)= H^i( \Hom_{kG}(Q_{*},N))$ and complete cohomology by $\widehat{H}^i(G;k) = \widehat{\Ext}^i_{kG}(k,k)$. Check the following (see \cite{brown}):
\begin{enumerate}
\item
$\widehat{\Ext}^i_{kG}(M,N)$ does not depend on the complete resolution chosen,
\item
$\widehat{\Ext}^i_{kG}(M,N) \cong \Ext^i_{kG}(M,N)$ for $i > \findim (kG)$,
\item
$\widehat{\Ext}^0_{kG}(M,N) \cong \Hom_{\ModProj}(\tilde{M},N) \cong \Hom_{\Stab (kG)}(M,N)$.
\end{enumerate}
\end{enumerate}
\end{exercises}

\section{Endotrivial Modules for Infinite Groups}

We repeat our standing assumption that all the groups we consider are of type $\Phi$.

\begin{definition}
A $kG$-module $M$ is endotrivial if there is another module $N$ such that $M \otimes N \simeq k$ in $\Stab(kG)$.
\end{definition}

The stable isomorphism classes of endotrivial modules form a group $T(G)$.

\begin{proposition}
\label{prop:subend}
If $M$ is endotrivial then its inverse is its dual $M^*$.

The module $M$ is endotrivial if and only if $M \! \downarrow _P$ is endotrivial for every finite (elementary abelian) $p$-subgroup $P$.
\end{proposition}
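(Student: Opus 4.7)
The plan is to handle both assertions at once by analysing the natural evaluation map $\ev \colon M \otimes M^* \to k$. I will show that, under either hypothesis, $\ev$ is a stable isomorphism. The key reduction is Lemma~\ref{lem:stabiso}: it suffices to check that $\ev \! \downarrow_P$ is a stable isomorphism over $kP$ for every finite (elementary abelian) $p$-subgroup $P \leq G$. The forward implication of the second claim is automatic here, because restriction is a symmetric monoidal functor sending projective $kG$-modules to projective $kP$-modules and hence descends to the stable categories; so $M \otimes N \simeq k$ in $\Stab(kG)$ restricts to a stable isomorphism over $kP$ for every finite $P$.

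For a fixed finite $p$-subgroup $P$ I know, either directly by hypothesis or by restricting a stable inverse $N$, that $M \! \downarrow_P$ is endotrivial over $kP$ in the stable sense. Since $M \! \downarrow_P$ may be infinite dimensional, I first apply Lemma~\ref{lem:infendo} to write $M \! \downarrow_P \cong \overline{M} \oplus (\proj)$ with $\overline{M}$ a finite-dimensional endotrivial $kP$-module. Lemma~\ref{lem:invdual} then gives that $\ev \colon \overline{M} \otimes \overline{M}^* \to k$ is split with projective kernel. Dualising the decomposition yields $(M \! \downarrow_P)^* \cong \overline{M}^* \oplus (\proj)^*$, and expanding the tensor product shows that $\ev \! \downarrow_P$ is block-diagonal: one block is the finite-dimensional $\ev$ on $\overline{M} \otimes \overline{M}^*$, and each remaining block factors through a projective $kP$-module and so is zero in $\Stab(kP)$. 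Hence $\ev \! \downarrow_P$ is a stable isomorphism.

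Lemma~\ref{lem:stabiso} now globalises this to the statement that $\ev \colon M \otimes M^* \to k$ is itself a stable isomorphism, so $M \otimes M^* \simeq k$. This immediately gives the backward direction of the second assertion: $M$ is endotrivial with stable inverse $M^*$. For the first assertion, cancellation in $T(G)$ via
\[ N \simeq N \otimes k \simeq N \otimes M \otimes M^* \simeq k \otimes M^* \simeq M^* \]
shows $[N] = [M^*]$.

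The main obstacle is the bookkeeping in the finite-subgroup step: $M \! \downarrow_P$ may be infinite dimensional, so Lemma~\ref{lem:invdual} does not apply directly, and one must first use Lemma~\ref{lem:infendo} to extract a finite-dimensional endotrivial summand and then verify that the leftover projective-tensor contributions to $\ev \! \downarrow_P$ vanish in $\Stab(kP)$, so that the problem reduces cleanly to the classical finite-dimensional case already handled in Section~\ref{sec:fg}.
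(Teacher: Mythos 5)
Your proposal is correct and follows exactly the route of the paper's own proof: show the evaluation map $M \otimes M^* \to k$ is a stable isomorphism by reducing, via Lemma~\ref{lem:stabiso}, to finite $p$-subgroups, where the infinite-dimensional version of Lemma~\ref{lem:invdual} (which the paper invokes by pointing to the remark after Lemma~\ref{lem:infendo}) applies. You have merely unpacked that one-line citation, spelling out the splitting $M \! \downarrow_P \cong \overline{M} \oplus (\proj)$ from Lemma~\ref{lem:infendo} and the resulting block structure of $\ev \! \downarrow_P$, which is a reasonable amount of extra detail but not a different argument.
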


\begin{proof}
Clearly an endotrivial module remains endotrivial on restriction. Consider the evaluation map $M \otimes M^* \stackrel{\ev}{\rightarrow} k$. If $M$ is endotrivial on restriction to a finite $p$-subgroup $P$, then $\ev$ is a stable isomorphism over $kP$, by the infinite dimensional version of Lemma~\ref{lem:invdual}, see the remark after Lemma~\ref{lem:infendo}. Thus $\ev$ is a stable isomorphism, by Lemma~\ref{lem:stabiso}.
\end{proof}

\begin{example}
$G = C_p \ast C_p'$, the free product of two groups of order $p$. $G$ acts on a tree with stabilizers conjugate to either $C_p$ or $C_p'$, so it is of type $\Phi$.

Consider the canonical map $k \! \uparrow ^G_{C_p} \rightarrow k, \ g \otimes x \mapsto gx$. If we restrict this to $C_p$ and use the Mackey formula we get a split surjection $k \oplus (\free) \rightarrow k$ and if we restrict it to $C_p'$ we get $(\free) \rightarrow k$. We also do the same thing starting with $C_p'$ and combine the two to get a map $k \! \uparrow ^G_{C_p} \oplus k \! \uparrow ^G_{C_p'} \rightarrow k$. On restriction to either $C_p$ or $C_p'$ this is a stable isomorphism. Any torsion subgroup of $G$ is conjugate to one of these two. Applying Lemma~\ref{lem:stabiso}, we obtain a stable isomorphism \[ k \simeq k \! \uparrow ^G_{C_p} \oplus k \! \uparrow ^G_{C_p'} . \]

Thus the trivial module decomposes. Note that the right hand side is Gorenstein projective by Exercise 3 of Section 3, since it is free over a subgroup of finite index.
\end{example}

In order to understand this decomposition better, we consider certain subgroup complexes associated to $G$. A suitable source for the theory of subgroup complexes is \cite{benson}, although it only deals with finite groups. Most of the proofs carry over easily to the infinite case, with a little help from \cite[II 2.7]{tomdieck}. This material is not used in what follows, so we will be brief. 

The Brown or Quillen complex is a simplicial complex, $\Delta (G)$, where the $r$-simplices correspond to chains $P_0<P_1 < \cdots P_r$ of non-trivial $p$-subgroups in the case of the Quillen complex, denoted $|\mathcal S_p(G)|$, or non-trivial elementary abelian $p$-subgroups in the case of the Brown complex, $|\mathcal A_p(G)|$. $G$ acts on $\Delta (G)$ by conjugation. The Brown and Quillen complexes are known to be equivariantly homotopy equivalent, so it will not matter which one we use. For simplicity here, we are going to assume that $\Delta (G)$ is equivariantly homotopy equivalent to a finite dimensional complex. This is clearly the case  for $|\mathcal A_p(G)|$ if the $p$-rank of $G$ is finite (i.e.\ there is a bound on rank of any elementary abelian $p$-subgroup).

We are interested in the simplicial chain complex $C(\Delta (G))$ and its augmented version $C(\Delta (G)) \stackrel{\epsilon}{\rightarrow} k$, which we denote by $\tilde{C}(\Delta (G))$. It can be shown that for any non-trivial finite $p$-subgroup $P$ of $G$, the fixed point set $\Delta (G)^P$ is contractible. Quillen showed that it follows that $\tilde{C}(\Delta (G))$ restricted to such a $P$ is equivalent in the derived category to a bounded complex of projectives and Webb showed that in fact the restriction is homotopy equivalent to such a complex. 

We can regard this augmented complex as an element of the derived category and hence of the stable category $\Stab(kG)$, by Theorem~\ref{thm:cat}. For this we use the functor $\Omega ^0 \circ (\mbox{projective resolution})$, which we abbreviate to $\Omega^0$. The fact that $\tilde{C}(\Delta (G))$ is equivalent to a bounded complex of projectives after restriction to any finite $p$-subgroup $P$ means that it is 0 in $\Stab(kP)$. Thus it is 0 even in $\Stab(kG)$, by Lemma~\ref{lem:stabiso}. Since we are working in a triangulated category, it follows that the augmentation $\epsilon$ is an isomorphism between $C(\Delta (G))$ and $k$. At the level of stable  modules, this means that $\Omega^0C(\Delta (G)) \simeq k$.

\begin{theorem}
If $C(\Delta (G))$ is equivariantly homotopy equivalent to a finite dimensional complex, where $\Delta (G)$ one of the complexes defined above (e.g.\ if $p \mhyphen \rank (G) < \infty$), then
$\Omega^0C(\Delta (G)) \simeq k$. Thus $k$ decomposes stably as a sum, $k \simeq \oplus_e k_e$, one for each path component of $\Delta (G)/G$. 
\end{theorem}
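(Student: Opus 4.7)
The plan is to follow the outline that the author has sketched in the paragraph preceding the theorem, and then extract the asserted direct-sum decomposition by splitting $\Delta(G)$ according to the orbits of its path components. First I would work with the augmented complex $\tilde{C}(\Delta(G))$, which fits into a distinguished triangle
$$C(\Delta(G)) \stackrel{\epsilon}{\rightarrow} k \rightarrow \tilde{C}(\Delta(G)) \rightarrow$$
in $D^b(kG \mhyphen \Mod)$. Since by Theorem~\ref{thm:cat} the stable category is triangulated, showing that $\tilde{C}(\Delta(G)) \simeq 0$ in $\Stab(kG)$ will force $\epsilon$ to be a stable isomorphism, which is precisely the statement $\Omega^0 C(\Delta(G)) \simeq k$.

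To prove $\tilde{C}(\Delta(G)) \simeq 0$ in $\Stab(kG)$, I would apply Lemma~\ref{lem:stabiso} and reduce to checking finite (elementary abelian) $p$-subgroups $P \leq G$. For $P$ trivial the assertion is automatic. For non-trivial $P$, Quillen's theorem that $\Delta(G)^P$ is contractible, combined with the cellular identification $\tilde{C}(\Delta(G))^P = \tilde{C}(\Delta(G)^P)$ and Webb's refinement quoted in the text, shows that $\tilde{C}(\Delta(G))\!\downarrow_P$ is chain-homotopy equivalent to a bounded complex of projective $kP$-modules. Hence it is zero in $D^b(kP \mhyphen \Mod)/D^b(kP \mhyphen \Proj)$, and therefore zero in $\Stab(kP)$ by Theorem~\ref{thm:cat}.

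For the decomposition, the group $G$ permutes the path components $\pi_0(\Delta(G))$, and the set of $G$-orbits on this set is exactly $\pi_0(\Delta(G)/G)$. For each $e \in \pi_0(\Delta(G)/G)$ let $Y_e \subseteq \Delta(G)$ be the $G$-invariant union of the path components in the orbit $e$, so $\Delta(G) = \bigsqcup_e Y_e$. Passing to cellular chains yields
$$C(\Delta(G)) = \bigoplus_e C(Y_e)$$
as chain complexes of $kG$-modules. Setting $k_e := \Omega^0 C(Y_e)$ and using that $\Omega^0$ is additive (it is part of an equivalence of additive categories), the stable isomorphism of the first part becomes
$$k \simeq \Omega^0 C(\Delta(G)) \simeq \bigoplus_e \Omega^0 C(Y_e) = \bigoplus_e k_e.$$

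The only genuinely non-formal input is the promotion of the topological contractibility of $\Delta(G)^P$ to a chain-homotopy (not merely derived) equivalence between $\tilde{C}(\Delta(G))\!\downarrow_P$ and a bounded complex of projective $kP$-modules; this is where Webb's theorem is needed, the reduction to finite $p$-subgroups being supplied by Lemma~\ref{lem:stabiso} and the translation between the various descriptions of the stable category by Theorem~\ref{thm:cat}. Everything else is a formal consequence of working in the triangulated category $\Stab(kG)$ together with the $G$-invariant decomposition of $\Delta(G)$ into orbit components.
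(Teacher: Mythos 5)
Your proposal is correct and follows the same route as the paper's own sketch: reduce to finite $p$-subgroups via Lemma~\ref{lem:stabiso}, invoke Quillen and Webb to see that $\tilde{C}(\Delta(G))\!\downarrow_P$ is chain-homotopy equivalent to a bounded complex of projectives and hence zero in $\Stab(kP)$, conclude via the triangulated structure that $\epsilon$ is a stable isomorphism, and then split $C(\Delta(G))$ along $G$-orbits of path components. The only quibble is cosmetic: the third term of your triangle is really $\tilde{C}(\Delta(G))[1]$ rather than $\tilde{C}(\Delta(G))$, but this does not affect the conclusion.
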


It is easy to see that the path components of $|\mathcal S_p(G)|/G$ correspond to the equivalence classes of non-trivial $p$-subgroups of $G$ under the equivalence relation generated by setting $P \sim Q$ if $P$ is contained in $Q$ or $P$ is conjugate to $Q$.
A more homological version of this result appears in \cite{CL}.

Let us write $\widehat{\End}_G(k)$ or $\widehat{\Aut}_G(k)$ for the stable endomorphism or automorphism group of $k$. Then $\widehat{\End}_G(k) = \prod_e \widehat{\End}_G(k_e)$ and we obtain idempotents corresponding to the $k_e$, which we also denote by $e$.
In fact, these idempotents are primitive (Exercise 2 of this section). 
It follows that we have an ``endo-$e$'' group $T_e(G)$ for each idempotent $e$, with identity $k_e$, and $T(G)= \prod_e T_e(G)$. 

There is a well-known fact that is important here.

\begin{theorem}
$\widehat{\End}_G(k)$ is a commutative ring.
\end{theorem}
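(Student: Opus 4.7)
The plan is to deduce commutativity from the general Eckmann--Hilton principle: in any monoidal category, the endomorphism monoid of the unit object is automatically commutative. I want to exhibit $\Stab(kG)$ as a monoidal category with tensor product $\otimes_k$ and unit $k$, so that $\widehat{\End}_G(k) = \Hom_{\Stab(kG)}(k,k)$ is the endomorphism monoid of the unit and hence commutative.

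First I would check that $\otimes_k$ descends to a bifunctor on $\Stab(kG)$. The key inputs are already available: tensoring with a projective gives a projective (Exercise 3 of Section 1), so $\otimes_k$ is well defined on $\ModProj(kG)$; and one has $\Omega(M \otimes N) \simeq (\Omega M) \otimes N \simeq M \otimes \Omega N$, so $\otimes_k$ commutes with the direct system defining $\Hom_{\Stab(kG)}$. Equivalently, using the Gorenstein projective model of Theorem~\ref{thm:cat}, one appeals to Exercise 5(a) of Section~3: the tensor product of a Gorenstein projective and anything is Gorenstein projective, so $\otimes_k$ restricts to a bifunctor on the Gorenstein projective subcategory. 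The standard associator and unitor isomorphisms $k \otimes M \xrightarrow{\sim} M \xleftarrow{\sim} M \otimes k$ over $kG$ pass to the stable category, giving $\Stab(kG)$ the structure of a monoidal category with unit $k$.

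Once this is set up, the Eckmann--Hilton argument finishes the proof: for any two stable endomorphisms $f, g \in \widehat{\End}_G(k)$, writing $\lambda : k \otimes k \xrightarrow{\sim} k$ for the unitor and using naturality of $\otimes_k$ together with the interchange law, one computes
\[
f \circ g \;=\; (f \otimes \id_k) \circ (\id_k \otimes g) \;=\; f \otimes g \;=\; (\id_k \otimes f) \circ (g \otimes \id_k) \;=\; g \circ f,
\]
where each equality uses $\lambda$ to identify $k \otimes k$ with $k$. The ring structure on $\widehat{\End}_G(k)$ is the one given by composition (equivalently, by $\otimes_k$ after this identification), so the computation shows it is commutative.

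The one subtle point, and the main obstacle, is the bookkeeping that shows $\otimes_k$ really does descend to a well-defined, associative, unital bifunctor on $\Stab(kG)$, rather than just on the classical $\ModProj(kG)$. For groups of type $\Phi$ this is not automatic because the definition of $\Stab(kG)$ involves a direct limit along $\Omega$; one must know that the stabilization maps are compatible with the tensor structure. Theorem~\ref{thm:cat} makes this manageable by allowing us to work with Gorenstein projective representatives, where $\otimes_k$ is genuinely a functor by Exercise 5(a) of Section 3 and stable maps are honest module maps by Lemma~\ref{lem:GPdom}.
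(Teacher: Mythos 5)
Your proof is correct and is essentially the paper's alternative (Eckmann--Hilton) argument: the paper also records the interchange law $(d \circ e) \otimes (f \circ g) = (d \otimes f) \circ (e \otimes g)$ and invokes Eckmann--Hilton, while its primary route instead identifies $\widehat{\End}_G(k)$ with $\widehat{H}^0(G;k)$ and cites the known agreement of cup and composition products in complete cohomology. You add useful detail by explicitly checking that $\otimes_k$ descends to a monoidal structure on $\Stab(kG)$ via the Gorenstein projective model of Theorem~\ref{thm:cat} and Exercise~5(a) of Section~3, a point the paper leaves implicit.
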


\begin{proof} Given $f,g \in \widehat{\End}_{G}(k)$ there are two possible ways to form a product. The obvious way is to compose them, $f \circ g$; but since $k \otimes k \simeq k$ we can also take the tensor product, $f \otimes g$, which is commutative. We want to show that these two products agree.

A homological approach is to note that $\widehat{\End}_G(k) \cong \widehat{H}^0(G;k)$. It is shown in \cite{ikenaga} that $\widehat{H}^*(G;k)$ has a cup product, which agrees with the tensor product on $\widehat{H}^0(G;k)$. The cup product is known to agree with the composition product, see \cite{brown}.

Alternatively, show that, given four endomorphisms,  $d,e,f,g$, there is a relation $(d \circ e) \otimes (f \circ g) = (d \otimes f) \circ (e \otimes g)$ (draw a diagram with arrows). The result now follows formally: look up the Eckmann-Hilton argument.
\end{proof}

\begin{exercises}

\begin{enumerate}
\item
Show that if $M$ is endotrivial then then the natural map $M \otimes M^* \rightarrow \End_k(M)$, $m \otimes f \mapsto ( n \mapsto f(n)m)$ is a stable isomorphism, so $\End_k(M) \simeq k$ in the stable category.
\item
Let $e \in \widehat{\End}_G(k)$ be an idempotent. Show that if $P \leq G$ is a non-trivial finite $p$-subgroup then $\res ^G_Pe$ is either 0 or 1. Show also that if $P'$ is in the same component of $|\mathcal S_p(G)|/G$ as $P$ then $\res ^G_Pe=\res ^G_{P'}e$. Deduce that the idempotents corresponding to the components of $\Delta (G)/G$ are primitive.
\item
Calculate $\widehat{\End}_{C_p \times \mathbb Z}(k)$ and $\widehat{\Aut}_{C_p \times \mathbb Z}(k)$. One way is to construct a complete resolution by tensoring a complete resolution for $C_p$ with a projective resolution for $\mathbb Z$. Alternatively, use the spectral sequence $H^p(G/H;\widehat{H}^q(H;k)) \Rightarrow \widehat{H}^{p+q}(G;k)$ for $G$ of finite $\vcd_k$ and $H$ a normal subgroup such that $G/H$ is of finite $\cd_k$ \cite{brown}.
\end{enumerate}
\end{exercises}

\section{Groups Acting on Trees}

A group that acts on a tree is of type $\Phi$ if all the stabilizers are of type $\Phi$ and there is a bound on their finitistic dimensions, by Exercise 3 of Section 2. We will assume that this is the case. For more information about groups acting on trees, see \cite{serre}.

We will consider two basic examples, amalgamated free products and HNN extensions and will leave it to the reader to formulate the general result. We will obtain an exact sequence that allows us to calculate $T(G)$ for these groups. It is similar to part of the one used to calculate cohomology \cite{brown}.

\begin{theorem}
\label{thm:amal}
For an amalgamated free product $G=A * _C B$ there is an exact sequence
\[ \begin{tikzcd}[column sep=large] \widehat{\Aut}_G(k) \ar[r, "\res^G_A \times \res^G_B"] & \widehat{\Aut}_A(k) \times \widehat{\Aut}_B(k) \ar[r, "\res^A_C-\res^B_C"] \ar[draw=none]{d}[name=X, anchor=center]{} & \widehat{\Aut}_C(k) \ar[rounded corners,
            to path={ -- ([xshift=2ex]\tikztostart.east)
                      |- (X.center) \tikztonodes
                      -| ([xshift=-2ex]\tikztotarget.west)
                      -- (\tikztotarget)} ]{dll}[at end]{\delta}\\
                        T(G) \ar[r, "\res^G_A \times \res^G_B"] & T(A) \times T(B) \ar[r, "\res^A_C-\res^B_C"] & T(C). \end{tikzcd} \]
\end{theorem}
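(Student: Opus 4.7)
The plan is to derive everything from a single short exact sequence of $kG$-modules associated to the Bass-Serre tree of $G = A *_C B$, apply $\Hom_{\Stab(kG)}(-,k)$ to extract the $\widehat{\End}$-version of the sequence (from which the $\widehat{\Aut}$-part follows by passing to units), and then construct the connecting map $\delta$ explicitly as a mapping cone.

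Since $G$ acts on a contractible tree with vertex stabilizers conjugate to $A$ or $B$ and edge stabilizers conjugate to $C$, the augmented cellular chain complex is the short exact sequence
\[ 0 \to k\!\uparrow_C^G \to k\!\uparrow_A^G \oplus k\!\uparrow_B^G \to k \to 0, \]
which yields a distinguished triangle in $\Stab(kG)$. Applying $\Hom_{\Stab(kG)}(-,k)$ and using the Shapiro isomorphism $\Hom_{\Stab(kG)}(k\!\uparrow_H^G, k) \cong \widehat{\End}_H(k)$ for $H \in \{A,B,C\}$ produces a long exact sequence beginning
\[ \widehat{\End}_G(k) \to \widehat{\End}_A(k) \times \widehat{\End}_B(k) \xrightarrow{\res^A_C - \res^B_C} \widehat{\End}_C(k) \to \cdots \]
These are commutative rings and the horizontal maps are ring homomorphisms, so the corresponding sequence of unit groups is exact, giving the $\widehat{\Aut}$-part of the statement.

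For the connecting map $\delta$ I would proceed as follows. Shapiro's lemma identifies $\widehat{\End}_C(k)$ with a natural summand of $\widehat{\End}_{kG}(k\!\uparrow_C^G)$, so $\phi \in \widehat{\Aut}_C(k)$ promotes to a stable $kG$-endomorphism of $k\!\uparrow_C^G$; twisting the first arrow of the standard triangle by this endomorphism on (say) the $A$-leg gives a new map $f_\phi \colon k\!\uparrow_C^G \to k\!\uparrow_A^G \oplus k\!\uparrow_B^G$ in $\Stab(kG)$, and I set $\delta(\phi) = [M_\phi]$, where $M_\phi$ is the mapping cone of $f_\phi$. To see that $M_\phi$ is endotrivial, Proposition~\ref{prop:subend} and Lemma~\ref{lem:stabiso} reduce the question to restriction to every finite $p$-subgroup $P$; by Bass-Serre theory every such $P$ is conjugate into $A$ or into $B$, and the Mackey formula shows that $k\!\uparrow_A^G\!\downarrow_P$, $k\!\uparrow_B^G\!\downarrow_P$ and $k\!\uparrow_C^G\!\downarrow_P$ each reduce stably to at most a single copy of $k$, the remaining Mackey summands being induced from $p$-free subgroups and so projective over $kP$. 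The restricted triangle collapses and one sees directly that the cone is stably $k$.

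Exactness at $\widehat{\Aut}_C(k)$ follows because a decomposition $\phi = \res^A_C \psi_A - \res^B_C \psi_B$ produces an automorphism of the standard triangle that identifies $M_\phi$ stably with $k$, and conversely. For $T(G)$, the composition $(\res^G_A,\res^G_B) \circ \delta$ vanishes by construction, and an endotrivial $M$ with $\res^G_A[M] = 0 = \res^G_B[M]$ admits stable trivialisations $M\!\downarrow_A \simeq k$ and $M\!\downarrow_B \simeq k$ that differ over $C$ by an automorphism $\phi$ with $M \simeq M_\phi$. The main obstacle is exactness at $T(A) \times T(B)$: given $[M_A]$ and $[M_B]$ with $\res^A_C[M_A] = \res^B_C[M_B]$ in $T(C)$, one must glue them into a $kG$-endotrivial. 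I would repeat the cone construction with $k$ replaced by $M_A$ on the $A$-leg and by $M_B$ on the $B$-leg, the gluing datum being a chosen stable isomorphism $M_A\!\downarrow_C \simeq M_B\!\downarrow_C$; endotriviality of the resulting module then follows from the same Mackey and restriction-to-$P$ analysis used for $\delta$, since on every finite $p$-subgroup the construction reduces to a triangle already analyzed.
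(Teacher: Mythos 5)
Your overall plan---use the Bass--Serre triangle $k\!\uparrow_C^G \to k\!\uparrow_A^G \oplus k\!\uparrow_B^G \to k$, apply $\Hom_{\Stab(kG)}(-,k)$ to get the top row, and define $\delta$ by a mapping cone---matches the paper's second, category-theoretic construction (the module $D(M,N;\phi)$), and the Mayer--Vietoris derivation of the $\widehat{\End}$-row is a clean way to package the top part. However, two steps in your argument have genuine gaps.

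First, ``the horizontal maps are ring homomorphisms, so the corresponding sequence of unit groups is exact'' is not a valid deduction. Exactness of the $\widehat{\End}$-sequence as additive groups tells you that a pair $(u,v)$ with $\res^A_C u = \res^B_C v$ lifts to some $w\in\widehat{\End}_G(k)$, but it does not tell you $w$ is invertible. Here you need the extra input of Lemma~\ref{lem:stabiso}: since every finite $p$-subgroup of $G$ is conjugate into $A$ or $B$, the lift $w$ restricts to a stable automorphism over every finite $p$-subgroup, hence is itself a stable automorphism.

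Second, the Mackey argument for endotriviality of the cone $M_\phi$ is wrong. It is not true that, for $P\leq A$ a finite $p$-subgroup, $k\!\uparrow_A^G\!\downarrow_P$ is stably at most one copy of $k$: the Mackey summands $k\!\downarrow_{P\cap{}^gA}\!\uparrow^P$ for $g\notin A$ need not be projective over $kP$, because $P\cap{}^gA$ can be a nontrivial $p$-group. Concretely, take $G=C_{p^2}*_{C_p}C_{p^2}$ and $P=A$; for $b\in B\smallsetminus C$, the subgroup $A\cap{}^bA$ equals $C=C_p$, so the Mackey summand is $k[C_{p^2}/C_p]$, which is not projective over $kC_{p^2}$. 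The paper circumvents this entirely by constructing the module $C(M,N;\phi)$ directly, as $M$ with a twisted $B$-action, so that $C(M,N;\phi)\!\downarrow_A = M$ and $C(M,N;\phi)\!\downarrow_B \cong N$ on the nose; endotriviality is then immediate from Proposition~\ref{prop:subend}. Your mapping-cone module is then identified with $C(M,N;\phi)$ by tensoring $C(M,N;\phi)$ with the tree short exact sequence; without that identification (or some argument comparing $f_\phi\!\downarrow_P$ with the untwisted map), the endotriviality of the cone is not established by the restriction analysis you give.

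Also note that the paper's first, representation-theoretic approach gives the cleanest route to exactness at $T(G)$ and $T(A)\times T(B)$, precisely because $C(M,N;\phi)$ has the stated restrictions by construction; your sketch of those exactness steps implicitly relies on the same identification of the cone with such a twisted module, so you should make that explicit rather than leaning on the Mackey claim.
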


\begin{proof}
We will outline two approaches. One involves a natural representation-theoretic construction, but it is tricky to justify every step in the proofs of its properties. The other is more category-theoretic, but the proofs are formalities. 

For the first, assume that we are given a $kA$-module $M$ and a $kB$-module $N$ and an isomorphism of the restrictions $M \! \downarrow_C \simeq N \! \downarrow _C$ (stable isomorphism). Then we can find representatives of the stable isomorphism classes such that $M \! \downarrow_C \cong N \! \downarrow _D$ (genuine isomorphism) (Exercise 1). Let $\phi : M \! \downarrow_C \rightarrow N \! \downarrow _D$ be such an isomorphism.

Define a $kG$-module $C(M,N;\phi)$ to be $M$ as a vector space and with $G$ action $*$ given by
$$a*m=am, \quad b*m = \phi^{-1}(b \phi (m)), \quad \quad \mbox{for } a \in A, b \in B, m \in M.$$
These agree on $C$ and so do define a $kG$-module. It is convenient to denote $M$ with this twisted action of $B$ by $\phi^*N$; there is a $kB$-isomorphism \begin{tikzcd} \phi^*N \ar[r, "\tilde{\phi}", "\cong"'] & N \end{tikzcd}.

Note that $C(M,N;\phi) \! \downarrow _A =M$ and $C(M,N;\phi)\! \downarrow _B = \phi^*N \cong N$. Thus, if $M$ and $N$ are endotrivial then so is $C(M,N;\phi)$; this is because, by Proposition~\ref{prop:subend}, we only have to check the restrictions to finite $p$-subgroups and any finite subgroup of $G$ is known to be conjugate to a subgroup of $A$ or $B$. This proves exactness at $T(A) \times T(B)$.

The map $\delta : \widehat{\Aut}_C(k) \rightarrow T(G)$ is defined by $\delta (\phi ) = C(k,k; \phi)$. It is not obvious that this is well defined, i.e.\ that it depends only on the stable class of $\phi$, so let us assume this for the moment. That $\delta$ is a homomorphism follows from the description of the product in $\widehat{\Aut}_G(k)$ in terms of tensor product. 

If $M$ is a $kG$-module and there are stable isomorphisms \begin{tikzcd} M \! \downarrow _A \ar[r, "\theta_A", "\simeq"'] & k \end{tikzcd} and \begin{tikzcd} M \! \downarrow _B \ar[r, "\theta_B", "\simeq"'] & k, \end{tikzcd} then restricting to $C$ and combining with $M \! \downarrow _A \! \downarrow _C = M \! \downarrow_B \! \downarrow _C$ yields a map $\phi = \theta_B \! \downarrow_C \theta _A^{-1} \! \downarrow _C \in \widehat{\Aut}(k)$. The $kA$-isomorphism \begin{tikzcd}M \ar[r, "\theta_A", "\simeq"'] & C(k,k;\phi) \end{tikzcd} is also a $kB$-isomorphism, by construction; this proves exactness at $T(G)$. Similarly, if we have a stable isomorphism  \begin{tikzcd} C(k,k;\phi) \ar[r, "\theta", "\simeq"'] & k \end{tikzcd}, then $\phi = ( \tilde{ \phi } ( \theta ^{-1}\! \downarrow _B))\! \downarrow _C \circ (\theta \! \downarrow _A ) \! \downarrow _C $, which proves exactness at $\widehat{\Aut}_C(k)$. 

A more category-theoretic approach is to define a module $D(M,N;\phi)$, depending on the same data as before, as follows. The diagram
\[ \begin{tikzcd} M \! \downarrow _C \! \uparrow ^G \ar[r] \ar[d, "\phi \! \uparrow^G"] & M \! \uparrow^G \\
N \! \downarrow _C \! \uparrow ^G \ar[r] & N \! \uparrow^G \end{tikzcd} \]
leads to a map
\[ \begin{tikzcd}[row sep=small] M \! \downarrow _C \! \uparrow ^G \ar[r] & M \! \uparrow^G \oplus N \! \uparrow ^G \\
g \otimes m \ar[r, mapsto] & (g \otimes m, g \otimes \phi (m) ). \end{tikzcd} \]
Let $D(M,N ; \phi )$ be the cone; it only depends on stable data.

It is a standard fact that $G$ can act on a tree with two orbits of vertices, stabilizers conjugate to $A$ and $B$, and one orbit of edges, stabilizers conjugate to $C$. The corresponding augmented chain complex is a short exact sequence
\[ \begin{tikzcd} k \! \uparrow ^G_C \ar[r] & k \! \uparrow ^G_A \oplus k \! \uparrow ^G_B \ar[r] & k. \end{tikzcd} \]
Taking the tensor product with  $C(M,N; \phi )$, we obtain a short exact sequence
\[ \begin{tikzcd}[row sep=small] M \! \downarrow _C \! \uparrow ^G \ar[r]  & M \! \uparrow ^G \oplus \phi ^*N \! \uparrow ^G \ar[r] & C(M,N; \phi ), \\
g \otimes m \ar[r, mapsto] & (g \otimes m, g \otimes m)  & {} 
\end{tikzcd} \]
so $C(M,N, \phi ) \simeq D(M,N; \phi)$.

It is possible to work only with $D(M,N; \phi)$, using the geometry of the action on the tree.
\end{proof}

There is a similar sequence for an HNN extension $G = H * _{(f,A)}$. This notation means that there is a subgroup $ A \leq H$ and an injective homomorphism $f : A \hookrightarrow H$ and $G = \Grp \langle H,t \mid tat^{-1} = f(a) \rangle $.

\begin{theorem}
\label{thm:hnn}
For an HNN extension $G= H * _{(f,A)}$ there is an exact sequence
\[ \begin{tikzcd}[column sep=huge] \widehat{\Aut}_G(k) \ar[r, "\res^G_H"] & \widehat{\Aut}_H(k) \ar[r, "\res^H_A-f^*\res^H_{f(A)}"] \ar[draw=none]{d}[name=X, anchor=center]{}  & \widehat{\Aut}_A(k) \ar[rounded corners,
            to path={ -- ([xshift=2ex]\tikztostart.east)
                      |- (X.center) \tikztonodes
                      -| ([xshift=-2ex]\tikztotarget.west)
                      -- (\tikztotarget)} ]{dll}[at end]{\delta}\\
                        T(G) \ar[r, "\res^G_H"] & T(H) \ar[r, "\res^H_A-f^*\res^H_{f(A)}"]  & T(A). \end{tikzcd} \]
\end{theorem}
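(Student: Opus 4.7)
The plan is to imitate the proof of Theorem~\ref{thm:amal} using the Bass-Serre tree of $G = H *_{(f,A)}$, which has a single orbit of vertices with stabilizer $H$ and a single orbit of edges with stabilizer $A$. The augmented simplicial chain complex of this tree is a short exact sequence
\[ \begin{tikzcd} k \! \uparrow^G_A \ar[r, "\partial"] & k \! \uparrow^G_H \ar[r] & k, \end{tikzcd} \]
in which the boundary $\partial$ encodes the two ways $A$ sits inside $H$: the direct inclusion $A \hookrightarrow H$ and the composite $A \stackrel{f}{\to} f(A) \hookrightarrow H$ (twisted by the stable letter $t$). This one sequence will drive the whole theorem just as the two-vertex-orbit sequence does in Theorem~\ref{thm:amal}, the only difference being that the gluing data reduces from a triple $(M,N;\phi)$ to a single module $M$ with a single twisting isomorphism $\phi$.

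First I would set up the representation-theoretic construction. Given a $kH$-module $M$ and a stable $kA$-isomorphism $\phi \colon M \! \downarrow_A \to f^{*}(M \! \downarrow_{f(A)})$, I would, after replacing $M$ by a stably equivalent representative, realize $\phi$ as a genuine $kA$-iso and then define $C(M;\phi)$ to be the vector space $M$ equipped with the given $kH$-action together with the rule $t \cdot m = \phi(m)$. The defining relation $tat^{-1} = f(a)$ becomes the identity $\phi(am) = f(a)\phi(m)$, which is exactly the equivariance of $\phi$, so $C(M;\phi)$ is a well-defined $kG$-module with $C(M;\phi) \! \downarrow_H \cong M$. Any finite subgroup of an HNN extension is conjugate into $H$, so Proposition~\ref{prop:subend} yields $[C(M;\phi)] \in T(G)$ whenever $[M] \in T(H)$, and I set $\delta(\phi) = [C(k;\phi)]$.

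Exactness at $T(H)$ is then immediate: a class $[M] \in T(H)$ killed by $\res^H_A - f^{*}\res^H_{f(A)}$ supplies a $\phi$ and hence the lift $[C(M;\phi)] \in T(G)$; conversely, for $[N] \in T(G)$ the action of $t$ furnishes a tautological stable $kA$-isomorphism $N \! \downarrow_A \simeq f^{*}(N \! \downarrow_{f(A)})$, so the composite restriction vanishes. Exactness at $T(G)$ is handled as in Theorem~\ref{thm:amal}: a stable isomorphism $\theta \colon N \! \downarrow_H \simeq k$ converts the $t$-twist of $N$ into a class in $\widehat{\Aut}_A(k)$ whose image under $\delta$ is $[N]$. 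Exactness at $\widehat{\Aut}_A(k)$ is the same argument read in reverse, and exactness at $\widehat{\Aut}_G(k)$ is a routine kernel computation.

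The delicate step---verifying that $\delta$ depends only on the stable class of $\phi$---is, as in Theorem~\ref{thm:amal}, most cleanly handled by a parallel mapping-cone construction $D(M;\phi)$: tensoring the short exact sequence above with $C(M;\phi)$ identifies $C(M;\phi)$ with the cone of an explicit morphism $M \! \downarrow_A \! \uparrow^G \to M \! \uparrow^G$ that manifestly depends only on the stable class of $\phi$, and the full six-term sequence then drops out from the triangulated structure of $\Stab(kG)$ together with Lemma~\ref{lem:stabiso}. The main obstacle I anticipate is the bookkeeping around the $f^{*}$ twist: one must keep careful track of which of the two endpoints of the canonical edge has stabilizer $H$ via the direct inclusion and which via conjugation by $t$, and arrange the induced maps so that the difference $\res^H_A - f^{*}\res^H_{f(A)}$ appears with the correct sign as the middle arrow of the sequence.
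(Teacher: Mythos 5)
Your proposal follows the paper's proof essentially verbatim: the twisted module (called $E(M;\theta)$ in the paper, $C(M;\phi)$ in yours) with $t\cdot m=\phi(m)$, the observation that finite subgroups of an HNN extension are conjugate into $H$ together with Proposition~\ref{prop:subend}, and the parallel mapping-cone construction $F(M;\theta)$ from the augmented chain complex of the Bass--Serre tree to handle stable well-definedness of $\delta$. The only slips are cosmetic: you write ``exactness at $\widehat{\Aut}_G(k)$'' where you must mean $\widehat{\Aut}_H(k)$, and the reduction from a stable to a genuine isomorphism $\phi$ deserves the same care as Exercise~1 of this section (stabilize by restrictions of a large free $kG$-module), which you gesture at but do not spell out.
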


\begin{proof}
As before, we sketch two approaches. For the first, we need to know that, given a $kH$-module $M$ and a stable isomorphism \begin{tikzcd} M \! \downarrow _A \ar[r, "\theta", "\simeq"'] & f^*M \! \downarrow _{f(A)} \end{tikzcd} (i.e.\ $\theta(am)=f(a)\theta (m)$), we can arrange representatives of the stable classes such that $\theta$ is a genuine isomorphism of modules.

Define $E(M; \theta )$ to be $M$ as a vector space and with $G$ action $*$ given by
$$h*m=hm, \quad t*m = \theta (m), \quad \quad \mbox{for } h \in H, m \in M.$$
It is easy to check that $(tat^{-1})* m = \theta(a \theta ^{-1} (m)) = f(a) \theta \theta^{-1}(m)=f(a)m$.
Any finite subgroup of $G$ is conjugate to a subgroup of $H$ and $ E(M, \theta ) \! \downarrow _H \cong M $, so if $M$ is endotrivial, then so is $E(M;\theta)$, by Proposition~\ref{prop:subend}.
The map $\delta : \widehat{\Aut}_C(k) \rightarrow T(G)$ is defined by $\delta (\theta ) = E(k; \theta)$.

For the category-theoretic approach, define a module $F(M;\theta)$, as follows. The diagram
\[ \begin{tikzcd}[column sep=small] g \otimes m \ar[d, mapsto] & M \! \downarrow _A \! \uparrow ^G \ar[rr] \ar[d] && M \! \uparrow^G \\
gt^{-1} \otimes \theta (m) & M \! \downarrow _{f(A)} \! \uparrow ^G \ar[rr] && M \! \uparrow^G \end{tikzcd} \]
leads to a map
\[ \begin{tikzcd}[row sep=small] M \! \downarrow _A \! \uparrow ^G \ar[r] & M \! \uparrow^G \\
g \otimes m \ar[r, mapsto] & g \otimes m - gt^{-1} \otimes \theta (m) ). \end{tikzcd} \]
Let $F(M; \theta )$ be the cone; it only depends on stable data. The rest of the proof is similar to the previous case and is left to the reader.
\end{proof}

\begin{lemma}
If $C$ is finite in Theorem~\ref{thm:amal} then the map $\delta$ is zero. If $H$ is finite in Theorem~\ref{thm:hnn} then the map $\delta$ is injective.
\end{lemma}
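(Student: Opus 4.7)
The plan is to reduce both parts to the observation that for any finite group $F$ the stable endomorphism ring of the trivial module is very simple, namely $\widehat{\End}_F(k) = \widehat{H}^{0}(F;k) = k^{F}/N_{F}k = k/|F|k$. If $p \nmid |F|$ this vanishes, so $\widehat{\Aut}_F(k) = 0$ and both assertions are vacuous. If $p \mid |F|$ then $\widehat{\End}_F(k) = k$, hence $\widehat{\Aut}_F(k) = k^{\times}$, and crucially every stable automorphism of $k$ over $kF$ is already represented by a genuine module isomorphism, namely scalar multiplication $\lambda \cdot 1_k$. This lets me take the distinguished module to literally be $k$ in the constructions of Theorems~\ref{thm:amal} and~\ref{thm:hnn}, rather than some $\Omega^{r}k$; this is where finiteness of $C$ (respectively $H$) is used.

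For the amalgamated product statement I would simply unwind the definition of $\delta(\phi) = C(k,k;\phi)$ with $\phi = \lambda \cdot 1_k$. The underlying vector space is $k$; the $A$-action is $a*m = am = m$; and the twisted $B$-action is
\[ b*m = \phi^{-1}(b\phi(m)) = \phi^{-1}(\phi(m)) = m, \]
because $B$ acts trivially on the target copy of $k$, so $b$ passes through $\phi$ harmlessly. Hence $C(k,k;\phi) = k$ as a genuine $kG$-module, and $\delta(\phi) = 0$ in $T(G)$.

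For the HNN statement the cleanest route is to use the exactness already established in Theorem~\ref{thm:hnn}, reducing the claim to showing that the map $\res^{H}_{A} - f^{*}\res^{H}_{f(A)} \colon \widehat{\Aut}_{H}(k) \to \widehat{\Aut}_{A}(k)$ is zero. For a scalar $\nu \in \widehat{\Aut}_H(k) = k^{\times}$, the restriction $\res^{H}_{A}\nu$ is the same scalar $\nu$ in $\widehat{\Aut}_A(k)$, and $\res^{H}_{f(A)}\nu$ is the same scalar $\nu$ in $\widehat{\Aut}_{f(A)}(k)$; the pullback $f^{*}$ along the isomorphism $f \colon A \xrightarrow{\cong} f(A)$ sends scalars identically to scalars. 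Hence the difference $\res^{H}_{A}\nu - f^{*}\res^{H}_{f(A)}\nu$ is trivial, so the image of the restriction-difference map is $0$, and exactness forces $\ker\delta = 0$.

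I do not expect a real obstacle. The only substantive input is the standard Tate-cohomology computation $\widehat{H}^{0}(F;k) = k$ for $F$ finite with $p \mid |F|$, together with the fact that scalar multiplication really does give a genuine automorphism of $k$; after that, the amalgamated-product case is a one-line module computation and the HNN case is a one-line diagram chase in the exact sequence. The one place to be careful is the amalgamated-product computation: it only works because $C$ is finite and so $\phi$ can be taken as an honest scalar isomorphism $k \to k$; for infinite $C$ one would have to pass to some $\Omega^{r}k$ to realise $\phi$ genuinely, and the collapse $b*m = m$ would no longer happen.
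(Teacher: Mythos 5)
Your proof is correct, and it takes a somewhat different route from the paper's, though the key input (finiteness forces $\widehat{\Aut}_C(k)\cong k^\times$, resp.\ $\widehat{\Aut}_H(k)\cong k^\times$, so everything in sight is a scalar) is the same. The paper's argument stays entirely inside the exact sequence: it observes that the map \emph{preceding} $\delta$ is surjective in the amalgamated case, and that $\res^G_H$ (two steps back) is surjective in the HNN case, then lets exactness do the rest. You instead compute directly: for the amalgamated case you unwind $\delta(\phi)=C(k,k;\phi)$ and show it is \emph{literally} the trivial $kG$-module when $\phi$ is a scalar, which bypasses the exact sequence entirely and is arguably more illuminating; for the HNN case you verify by hand that $\res^H_A-f^*\res^H_{f(A)}$ kills scalars, reaching the same intermediate conclusion as the paper (that this map vanishes) without invoking surjectivity of $\res^G_H$ or exactness at $\widehat{\Aut}_H(k)$. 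Both routes are sound; yours is more concrete and requires less of the exact sequence, at the cost of having to recall the explicit construction of $C(k,k;\phi)$, whereas the paper's is a clean formal consequence once surjectivity of the upstream maps is accepted. One small point of hygiene: since $\widehat{\Aut}_A(k)$ is a multiplicative group, the ``difference'' in $\res^H_A\nu-f^*\res^H_{f(A)}\nu$ should be read as a ratio, i.e.\ $\nu\cdot\nu^{-1}=1$, but this is clearly what you intend.
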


\begin{proof}
If $C$ is finite then $\widehat{\Aut}_C(k) \cong k^{\times}$ and similarly for $H$. In other words, the only automorphisms are multiplication by a scalar. Clearly the map preceding $\delta$ in Theorem~\ref{thm:amal} is surjective, as is $\res^G_H$ in Theorem~\ref{thm:hnn}.
\end{proof}

\begin{exercises}
\begin{enumerate}
\item
In the context of Theorem~\ref{thm:amal}, show that it is possible to find representatives for $M$, $N$ and $\phi$ such that $\phi$ is a module isomorphism. First choose $M$ and $N$ Gorenstein projective so, by Exercise 2 of Section 3 or directly, there exist projective $kC$-modules $P$ and $Q$ such that $\phi$ can be realized as an isomorphism $M \! \downarrow _C \oplus P \rightarrow N \! \downarrow _C \oplus Q$. The snag is that $P$ might not be the restriction of a projective $kA$-module. Let $F$ be a free $kG$-module of sufficiently high rank that $F \! \downarrow _C \oplus P \cong F \! \downarrow _C \cong F \! \downarrow _C \oplus Q$, by the Eilenberg trick; add the appropriate restrictions of $F$ to $M$ and $N$.
\item
Calculate $T(\Sl_2(\mathbb Z))$ at different primes ($\Sl_2(\mathbb Z) \cong C_6 *_{C_2}C_4$).
\item
Calculate $T(C_{p^2}*_{C_p}C_{p^2})$.
\item
There is an obvious surjection $C_4*_{C_2}C_4 \rightarrow Q_8$. Calculate the inflation map $T(Q_8) \rightarrow T(C_4*_{C_2}C_4)$.
\item
Calculate $T(C_p * \mathbb Z)$ and $T(C_p \times \mathbb Z)$ (the latter group is an HNN extension). What happens to the 1-dimensional representations of $\mathbb Z$? Are the groups finitely generated?
\item
Calculate $T(C_p \times \mathbb Z \times \mathbb Z)$. Can you identify explicit modules that generate?
\item
Calculate $T(C_{p^\infty})$ ($C_{p^\infty}$ is the $p$-torsion subgroup of $\mathbb Q / \mathbb Z$; it acts on a tree with finite stabilizers \cite{ikenaga}).
\end{enumerate}
\end{exercises}

\end{document}